\newtheorem{Theorem}{Theorem}[section]
\newtheorem{Proposition}[Theorem]{Proposition}
\newtheorem{Lemma}[Theorem]{Lemma}
\newtheorem{Corollary}[Theorem]{Corollary}
\theoremstyle{definition}
\newtheorem{Definition}[Theorem]{Definition}
\newtheorem{Remark}[Theorem]{Remark}
\newcommand{\bTheorem}[1]{
\begin{Theorem} \label{T#1} }
\newcommand{\eT}{\end{Theorem}}
\newcommand{\bProposition}[1]{
\begin{Proposition} \label{P#1}}
\newcommand{\eP}{\end{Proposition}}
\newcommand{\bLemma}[1]{
\begin{Lemma} \label{L#1} }
\newcommand{\eL}{\end{Lemma}}
\newcommand{\bCorollary}[1]{
\begin{Corollary} \label{C#1} }
\newcommand{\eC}{\end{Corollary}}
\newcommand{\bRemark}[1]{
\begin{Remark} \label{R#1} }
\newcommand{\eR}{\end{Remark}}
\newcommand{\bDefinition}[1]{
\begin{Definition} \label{D#1} }
\newcommand{\eD}{\end{Definition}}
\newcommand{\vve}{\vc{v}_\ep}
\newcommand{\Nu}{\mathcal{V}_{t,x}}
\newcommand{\Du}{\mathbb{D}_u}
\newcommand{\DT}{\vc{D}_\vt}
\newcommand{\SGrad}{\mathbb{D}}
\newcommand{\bfphi}{\boldsymbol{\varphi}}
\newcommand{\bFormula}[1]{
\begin{equation} \label{#1}}
\newcommand{\eF}{\end{equation}}
\newcommand{\Ov}[1]{\overline{#1}}
\newcommand{\DC}{C^\infty_c}
\newcommand{\aleq}{\stackrel{<}{\sim}}
\newcommand{\ageq}{\stackrel{>}{\sim}}
\newcommand{\vr}{\varrho}
\newcommand{\vre}{\vr_\ep}
\newcommand{\vte}{\vt_\ep}
\newcommand{\vue}{\vu_\ep}
\newcommand{\tvr}{\tilde \vr}
\newcommand{\tvu}{{\tilde \vu}}
\newcommand{\tvt}{\tilde \vt}
\newcommand{\vt}{\vartheta}
\newcommand{\vu}{\vc{u}}
\newcommand{\vc}[1]{{\bf #1}}
\newcommand{\Div}{{\rm div}_x}
\newcommand{\Grad}{\nabla_x}
\newcommand{\dx}{\,{\rm d} {x}}
\newcommand{\dt}{\,{\rm d} t }
\newcommand{\intO}[1]{\int_{\Omega} #1 \ \dx}
\newcommand{\vv}{\vc{v}}
\newcommand{\ep}{\varepsilon}
\definecolor{Cgrey}{rgb}{0.85,0.85,0.85}
\definecolor{Cblue}{rgb}{0.50,0.85,0.85}
\definecolor{Cred}{rgb}{1,0,0}
\definecolor{fancy}{rgb}{0.10,0.85,0.10}
\newcommand\Cbox[2]{%
    \newbox\contentbox%
    \newbox\bkgdbox%
    \setbox\contentbox\hbox to \hsize{%
        \vtop{
            \kern\columnsep
            \hbox to \hsize{%
                \kern\columnsep%
                \advance\hsize by -2\columnsep%
                \setlength{\textwidth}{\hsize}%
                \vbox{
                    \parskip=\baselineskip
                    \parindent=0bp
                    #2
                }%
                \kern\columnsep%
            }%
            \kern\columnsep%
        }%
    }%
    \setbox\bkgdbox\vbox{
        \color{#1}
        \hrule width  \wd\contentbox %
               height \ht\contentbox %
               depth  \dp\contentbox
        \color{black}
    }%
    \wd\bkgdbox=0bp%
    \vbox{\hbox to \hsize{\box\bkgdbox\box\contentbox}}%
    \vskip\baselineskip%
}
\date{}
\begin{document}


\title{Stability of strong solutions to the Navier--Stokes--Fourier system}

\author{Jan B\v rezina \and Eduard Feireisl
\thanks{The research of E.F.~leading to these results has received funding from the
Czech Sciences Foundation (GA\v CR), Grant Agreement
18--05974S. The Institute of Mathematics of the Academy of Sciences of
the Czech Republic is supported by RVO:67985840.} \and Anton\' \i n Novotn\' y
}

\date{\today}

\maketitle

\bigskip

\centerline{Tokyo Institute of Technology}

\centerline{ 2-12-1 Ookayama, Meguro-ku, Tokyo, 152-8550, Japan}

\bigskip

\centerline{Institute of Mathematics of the Academy of Sciences of the Czech Republic}

\centerline{\v Zitn\' a 25, CZ-115 67 Praha 1, Czech Republic}

\bigskip

\centerline{Universit{\' e} de Toulon, IMATH, EA 2134}

\centerline{BP 20132, 83957 La Garde, France}

\bigskip

\begin{abstract}

We identify a large class of objects - \emph{dissipative measure--valued (DMV) solutions} to the Navier--Stokes--Fourier system - in which the strong solutions are stable. More precisely, a DMV solution coincides with the strong solution emanating from the same initial data as long as the latter exists. The DMV solutions are represented by parameterized families of measures satisfying certain compatibility conditions. They can be seen as an analogue to the dissipative measure--valued solutions introduced earlier in the context of the (inviscid) Euler system.

\end{abstract}

{\bf Keywords:} Navier--Stokes--Fourier system, measure--valued solution, weak--strong uniqueness, relative energy


\section{Navier--Stokes--Fourier system}
\label{N}

Motivated by our earlier work on the dissipative measure--valued solutions to the Euler system
(see \cite{BreFei17B}, \cite{BreFei17}, \cite{BreFei17A}), we consider the Navier--Stokes--Fourier system describing the time evolution of a general compressible, viscous, and heat conducting fluid. Our goal is to identify the largest class possible of objects that can be understood as
``solutions'' to this problem. These objects represent a generalization of classical solutions and will coincide with a classical solution emanating from the same initial data as long as the classical solution exists. They will be described by means of a parameterized family of probability measures $\{ \Nu \}_{(t,x) \in Q_T}$, where the vectors $(t,x)$ range over the physical space - the space--time cylinder
\[
Q_T = (0,T) \times \Omega, \ \Omega \subset R^N \ \mbox{a domain},\ N=1,2,3.
\]
We call these objects \emph{dissipative measure--valued (DMV) solutions} although $\{ \Nu \}_{(t,x) \in Q_T}$ may not, in general, coincide with some
Young measure generated by a weakly convergent family of state variables.

The concept of measure--valued solutions in the context of \emph{inviscid} fluids has been used by several authors starting with the pioneering papers by DiPerna \cite{Dip2}
and DiPerna and Majda \cite{DipMaj}, see also related results by Kroener and Zajaczkowski \cite{KrZa} , Matu{\v s}{\accent23u}-Ne{\v c}asov{\' a} and Novotn{\' y}
\cite{MNNO}, Neustupa \cite{Neustup}. A nice summary of the ``fluid mechanics'' applications of this method is in the monograph by M\' alek et al. \cite{MNRR}.
Recently, Tzavaras and coauthors adapted these ideas to problems in elastodynamics, thermoelasticity, and other related problems \cite{CrGaTz}, \cite{DeStTz}, \cite{LatTza}.
Another application to the {C}ucker--{S}male equation is due to Mucha and Peszek
\cite{MuPe}.
Interesting results in numerical analysis have been obtained by
Fjordholm et al. \cite{FjKaMiTa}, \cite{FjMiTa2}, \cite{FjMiTa1}.
Here we apply the same technique to a model of \emph{viscous} fluids. Accordingly, our main ambition is not to capture possible
oscillations in the solution set which might be a highly unlikely phenomenon when viscosity is present, but rather to identify the largest class of objects in which the strong solutions represent a stable set. In particular, the dissipative measure--valued solutions introduced below will satisfy the weak--strong uniqueness principle. We expect these results will find applications whenever approximate solutions are constructed, in particular by means of a numerical scheme. It might be relatively easy to show that families of numerical approximations
generate a measure--valued solution, the latter being quite general object. Such an observation together with the weak--strong uniqueness principle will finally provide a rigorous
convergence proof as long as the strong solution is available. This philosophy has been successfully applied in \cite{FeiLM} in the case of a simpler barotropic fluid flow. We expect that
the results obtained in the present paper will give rise to similar applications in the context of physically relevant models of compressible, viscous, and heat conducting fluids.

\subsection{Phase variables}

The state of the fluid is determined by the \emph{standard phase variables}: The mass density $\vr$, the (absolute) temperature $\vt$, and the (bulk) velocity $\vu \in R^N$. In addition, in the context of viscous and heat conducting fluid, it is convenient to include $\Grad \vu$ and $\Grad \vt$ in the family of state variables.

\subsection{Thermodynamic functions}

The constitutive properties of a given fluid will be expressed through \emph{thermodynamic functions}: The pressure $p = p(\vr, \vt)$, the internal energy $e = e(\vr, \vt)$, and the entropy $s = s(\vr, \vt)$, interrelated through \emph{Gibbs' equation}:
\begin{equation} \label{N1}
\vt D s = D e + p D \left( \frac{1}{\vr} \right).
\end{equation}

\subsection{Physical space}

We use the \emph{Eulerian coordinate system}, where the independent variables are the time $t \in [0,T]$, and the spatial position $x \in \Omega$, $\Omega \subset R^N$ a domain.

\subsection{Field equations}

The time evolution of $\vr = \vr(t,x)$, $\vt = \vt(t,x)$, and $\vu = \vu(t,x)$ is determined by a system of partial differential equations expressing the basic physical principles:

\begin{itemize}
\item {\bf Mass conservation or equation of continuity}
\begin{equation} \label{N2}
\partial_t \vr + \Div (\vr \vu) = 0;
\end{equation}

\item {\bf Conservation of linear momentum or Newton's second law}
\begin{equation} \label{N3}
\partial_t (\vr \vu) + \Div (\vr \vu \otimes \vu) + \Grad p(\vr, \vt) = \Div \mathbb{S},
\end{equation}
where $\mathbb{S}$ is the viscous stress tensor;

\item {\bf Conservation of internal energy}

\begin{equation} \label{N4}
\partial_t (\vr e(\vr, \vt)) + \Div (\vr e(\vr, \vt) \vu ) + \Div \vc{q} = \mathbb{S} : \Grad \vu - p(\vr, \vt) \Div \vu,
\end{equation}
where $\vc{q}$ is the internal energy flux.

\end{itemize}

\subsection{Diffusion terms}

To close the system (\ref{N2}--\ref{N4}), we suppose that $\mathbb{S} = \mathbb{S}(\vr, \vt, \Grad \vu)$ is given by \emph{Newton's rheological law}
\begin{equation} \label{N5}
\mathbb{S} = \mu(\vr, \vt) \left( \Grad \vu + \Grad^t \vu - \frac{2}{N} \Div \vu \mathbb{I} \right) + \lambda (\vr, \vt) \Div \vu \mathbb{I};
\end{equation}
while the internal energy flux obeys \emph{Fourier's law}
\begin{equation} \label{N6}
\vc{q} = - \kappa(\vr, \vt) \Grad \vt.
\end{equation}

The system (\ref{N2}--\ref{N6}) is termed \emph{Navier--Stokes--Fourier system}.

\subsection{Boundary conditions}

We suppose that $\Omega \subset R^N$ is a bounded domain and consider the no-slip/no flux boundary conditions:
\begin{equation} \label{N7}
\vu|_{\partial \Omega} = 0,\ \vc{q} \cdot \vc{n}|_{\partial \Omega} = \kappa \Grad \vt \cdot \vc{n}|_{\partial \Omega} = 0.
\end{equation}

\subsection{The basic laws of thermodynamics}

The \emph{First law of thermodynamics} requires the total energy of the fluid system to be conserved:
\begin{equation} \label{N8}
\partial_t \left( \frac{1}{2} \vr |\vu|^2 + \vr e \right) + \Div \left[ \left( \frac{1}{2} \vr |\vu|^2 + \vr e \right) \vu \right] +
\Div (p(\vr, \vt) \vu) = \Div \left( \mathbb{S} \cdot \vu \right) - \Div \vc{q}.
\end{equation}
Note that (\ref{N8}) follows from (\ref{N2}--\ref{N4}) as long as all quantities are smooth.

The \emph{Second law of thermodynamics} postulates the entropy production principle:
\begin{equation} \label{N9}
\partial_t (\vr s) + \Div \left( \vr s \vu \right) + \Div \left( \frac{\vc{q}}{\vt} \right) = \sigma,
\end{equation}
where the entropy production rate $\sigma$ is given as
\begin{equation} \label{N10}
\sigma = \frac{1}{\vt} \left( \mathbb{S} : \Grad \vu - \frac{ \vc{q} \cdot \Grad \vt }{\vt} \right).
\end{equation}
Similarly to (\ref{N8}), the entropy balance equation (\ref{N9}), (\ref{N10}) can be derived directly from (\ref{N2}), (\ref{N4}), with help of Gibb's relation
(\ref{N1}), as soon as all quantities are smooth.

The rest of the paper is organized as follows. In Section \ref{G}, we introduce the concept of dissipative measure--valued solution to the Navier--Stokes--Fourier system. Section \ref{r} deals with the basic tool for studying stability -- the relative energy and the associated relative energy inequality --
expressed in terms of DMV solutions. In Section \ref{w}, we use the relative energy to compare the ``distance'' between a DMV solution and a smooth solution of the same problem. In Section \ref{u}, we establish the desired weak--strong uniqueness property under several restrictions imposed on both the DMV and the
strong solution. In Section \ref{C}, we find sufficient conditions for a DMV solution to be a classical one by applying the regularity criterion of
Sun, Wang, and Zhang \cite{SuWaZh}.
The paper is concluded by a short discussion on possible extensions of the theory in Section \ref{CC}.

\section{Generalized solutions}

\label{G}

The basic idea borrowed from \cite{FeNo6} is to retain equations (\ref{N2}), (\ref{N3}) while (\ref{N4}) is replaced by
\begin{itemize}
\item
total energy balance
\begin{equation} \label{G1}
\partial_t \intO{ \left( \frac{1}{2} \vr |\vu|^2 + \vr e \right) } = 0;
\end{equation}
\item
the entropy inequality
\begin{equation} \label{G2}
\begin{split}
\partial_t (\vr s) &+ \Div \left( \vr s \vu \right) + \Div \left( \frac{\vc{q}}{\vt} \right) = \sigma,\\
\sigma &\geq \frac{1}{\vt} \left( \mathbb{S} : \Grad \vu - \frac{ \vc{q} \cdot \Grad \vt }{\vt} \right).
\end{split}
\end{equation}

\end{itemize}

As shown in \cite[Chapter 2]{FeNo6}, any smooth weak (distributional) solution of (\ref{N2}), (\ref{N3}), (\ref{G1}), (\ref{G2}) satisfying the boundary
conditions (\ref{N7}) is a classical solution of the original system (\ref{N2}--\ref{N4}).
The basic theory in the framework of weak (distributional) solutions for the problem (\ref{N2}), (\ref{N3}), (\ref{G1}), (\ref{G2})
has been developed in \cite{FeNo6}. The main advantage of a weak formulation based on (\ref{G1}), (\ref{G2}) rather than (\ref{N8}) is that
the integrated energy balance (\ref{G1}) does not contain the convective terms present in (\ref{N8}), on which we have no control.

The \emph{measure--valued solutions} are typically generated by a sequence of weak solutions, see e.g. the monograph by M\' alek et al. \cite{MNRR}.
Here we adopt a slightly different approach advocated by Brennier, DeLellis, and Sz\' ekelyhidi \cite{BrDeSz}, Gwiazda et al. \cite{FGSWW1}, \cite{GSWW}
defining a measure--valued solution as an object independent of any generating procedure whatever the latter might be.

\subsection{Phase space}

A natural candidate for the phase space is of course the space based on the standard variables $[\vr, \vt, \vu]$. However, the entropy balance
(\ref{G2}), or more specifically, the entropy production rate $\sigma$, contains also spatial gradients of $(\vu, \vt)$.
A suitable phase space framework for the measure--valued solutions is therefore
\[
\mathcal{F} = \left\{ \left[\vr, \vt, \vu, \Du, \DT \right] \ \Big|
\ \vr \in [0, \infty), \ \vt \in [0, \infty), \ \vu \in R^N, \ \Du \in R^{N \times N}_{\rm sym}, \ \DT \in R^N \ \right\},
\]
where $\Du$ stands for the symmetric velocity gradient $\mathbb{D}[\vu] \equiv \frac{1}{2} \left(\Grad \vu + \Grad^t \vu \right)$, and $\DT$ for $\Grad \vt$. Oscillations in sequences of (weak) solutions are usually described in terms of the associated Young measure. Analogously, we define a \emph{measure--valued solution} to the Navier--Stokes--Fourier system
on the space--time cylinder $Q_T = (0,T) \times \Omega$
as a
family of \emph{probability measures} $\{ \Nu \}_{(t,x) \in Q_T}$,
\[
\Nu \in \mathcal{P}(\mathcal{F}) \ \mbox{for a.a.}\ (t,x) \in Q_T,\
(t,x) \in Q_T \mapsto \mathcal{M}(\mathcal{F}) \ \mbox{weakly-(*) measurable.}
\]
Indeed the measure $\Nu$ can be interpreted as probability that a solution $[\vr, \vt, \vu, \Du, \DT]$
at a point $(t,x) \in Q_T$ attains certain value in the phase space $\mathcal{F}$. Classical ``single valued'' solutions
$[\vr, \vt, \vu]$ are represented by a Dirac mass,
\[
[\vr, \vt, \vu, \Du, \DT] \approx \delta_{\vr, \vt, \vu, \mathbb{D}[\vu], \Grad \vt}.
\]

In addition, we require a very natural compatibility condition,
\[
\frac{1}{2} \left( \Grad \left< \Nu; \vu \right> + \Grad^t \left< \Nu; \vu \right> \right)= \left< \Nu; \Du \right>,\
\Grad \left< \Nu; \vt \right> = \left< \Nu; \DT \right> \ \mbox{in} \ \mathcal{D}'(Q_T),
\]
more precisely
\begin{equation} \label{G4}
\begin{split}
-  \int_0^T \intO{ \left< \Nu; \vu \right> \cdot \Div \mathbb{T} }\dt &=
\int_0^T \intO{\left< \Nu; \Du \right> : \mathbb{T} } \dt \ \mbox{for any}\ \mathbb{T} \in C^1(\Ov{Q}_T; R^{N \times N}_{\rm sym}), \\
 - \int_0^T \intO{ \left< \Nu; \vt \right>  \Div \bfphi }\dt &=
\int_0^T \intO{\left< \Nu; \DT \right> \cdot \bfphi } \dt \ \mbox{for any}\ \bfphi \in C^1(\Ov{Q}_T; R^N),\\ \bfphi \cdot \vc{n}|_{\partial \Omega} &= 0.
\end{split}
\end{equation}
Note that (\ref{G4}) also reflects  the no-slip condition $\vu|_{\partial \Omega} = 0$.

\subsection{Initial data}

Similarly to the preceding part, the initial state of the system
\[
[\vr_0, \vt_0, \vu_0] = [\vr(0, \cdot), \vt(0, \cdot), \vu(0, \cdot)]
\]
can be expressed by means of a parameterized family of measures $\{ \mathcal{V}_{0,x} \}_{x \in \Omega}$ acting on the phase space
\[
\mathcal{F}_0 = \left\{ [\vr_0, \vt_0, \vu_0] \ \Big| \vr_0 \in [0, \infty),\
\vt_0 \in [0, \infty), \vu_0 \in R^N \right\}.
\]
To avoid ambiguity concerning the values of $\vu_0$ on the vacuum, we always assume the initial density $\vr_0$
to be bounded below away from zero.

\subsection{Field equations}

In accordance with the previous discussion, a generalized formulation of the equation of continuity (\ref{N2}) reads
\begin{equation} \label{G5}
\left[ \intO{ \left< \Nu ; \vr \right> \varphi (t,x) } \right]_{t = 0}^{t = \tau} =
\int_0^\tau \intO{ \left[ \left< \Nu ; \vr \right> \partial_t \varphi(t,x) + \left< \Nu; \vr \vu \right> \cdot \Grad \varphi(t,x) \right] } \dt
\end{equation}
for a.a. $\tau \in (0,T)$ and any $\varphi \in \DC( [0,T) \times \Ov{\Omega})$.

Similarly, the momentum equation (\ref{N3}) reads
\begin{equation} \label{G6}
\begin{split}
&\left[ \intO{ \left< \Nu ; \vr \vu \right> \cdot \bfphi (t,x) } \right]_{t = 0}^{t = \tau} \\ &=
\int_0^\tau \intO{ \left[ \left< \Nu ; \vr \vu \right> \cdot \partial_t \bfphi(t,x) + \left< \Nu; \vr \vu \otimes \vu \right> : \Grad \bfphi(t,x)
+ \left< \Nu ; p(\vr, \vt) \right> \Div \bfphi (t,x) \right] } \dt\\
&+ \int_0^\tau \intO{ \left< \Nu; \mathbb{S}(\vr, \vt, \Du ) \right> : \Grad \bfphi(t,x) } \dt +
\int_0^\tau \int_{\Ov{\Omega}} \Grad \bfphi : {\rm d}{\nu_C}
\end{split}
\end{equation}
for a.a. $\tau \in (0,T)$ and any $\bfphi \in \DC([0,T) \times \Omega; R^N)$.
The symbol $\nu_C$ in (\ref{G6}) stands for the \emph{concentration measure}. More specifically, $\nu_C$ is a (tensor valued) signed Borel measure,\footnote{The term $\int_0^\tau \int_{\Ov{\Omega}} \Grad \bfphi : {\rm d}{\nu_C}$ is understood as the value of the functional $\nu_C$ over the continuous function $\Grad \bfphi$.}
$\nu_C \in \mathcal{M}( \Ov{Q}_T ; R^{N \times N})$, that will be controlled by the {\it concentration defect} introduced below, cf. also \cite{BreFei17}.

The energy equation (\ref{N8}) is replaced by the total energy balance (\ref{G1}), more specifically
\begin{equation} \label{G7}
\intO{ \left< \mathcal{V}_{\tau,x} ; \frac{1}{2} \vr |\vu|^2 + \vr e(\vr, \vt) \right>  }
\leq  \intO{ \left< \mathcal{V}_{0,x} ; \frac{1}{2} \vr |\vu|^2 + \vr e(\vr, \vt) \right>  }
\end{equation}
for a.a. $\tau \in (0,T)$.

Finally, we reformulate the entropy inequality (\ref{G2}) as
\begin{equation} \label{G8}
\begin{split}
&\left[ \intO{ \left< \Nu; \vr s(\vr, \vt) \right> \varphi(t,x) } \right]_{t = 0}^{t = \tau} \\& =
\int_0^\tau \intO{ \left[ \left< \Nu; \vr s(\vr, \vt) \right> \partial_t \varphi(t,x) +
\left< \Nu; \vr s(\vr, \vt) \vu - \frac{\kappa(\vr, \vt)}{\vt} \DT \right>
\cdot \Grad \varphi(t,x) \right] } \dt\\
&+ \int_0^\tau \int_{\Ov{\Omega}} \varphi (t,x) \ {\rm d}\sigma  \ \mbox{for a.a.}\ \tau \in (0,T),
\end{split}
\end{equation}
and any $\varphi \in \DC([0,T) \times \Ov{\Omega})$,
where the entropy production rate $\sigma \in \mathcal{M}^+(\Ov{Q}_T)$ is a non--negative Borel measure satisfying
\begin{equation} \label{G9}
\sigma \geq \left< \Nu ; \frac{1}{\vt} \left( \mathbb{S}(\vr, \vt, \Du) : \Du + \frac{\kappa (\vr, \vt)}{\vt} |\DT |^2 \right) \right>.
\end{equation}
Thus we may rewrite (\ref{G8}), (\ref{G9}) in a more concise form
\begin{equation} \label{G10}
\begin{split}
&\left[ \intO{ \left< \Nu; \vr s(\vr, \vt) \right> \varphi(t,x) } \right]_{t = 0}^{t = \tau} \\& \geq
\int_0^\tau \intO{ \left[ \left< \Nu; \vr s(\vr, \vt) \right> \partial_t \varphi(t,x) +
\left< \Nu; \vr s(\vr, \vt) \vu - \frac{\kappa(\vr, \vt)}{\vt} \DT \right>
\cdot \Grad \varphi(t,x) \right] } \dt\\
&+ \int_0^\tau \intO{ \left< \Nu ; \frac{1}{\vt} \left( \mathbb{S}(\vr, \vt, \Du) : \Du + \frac{\kappa (\vr, \vt)}{\vt} |\DT |^2 \right) \right>
\varphi (t,x) } \dt
\end{split}
\end{equation}
for a.a. $\tau \in (0,T)$ and any $\varphi \in \DC([0,T) \times \Ov{\Omega})$, $\varphi \geq 0$.

\begin{Remark} \label{R1}

The functions $(\vr, \vt, \Du) \mapsto \frac{1}{\vt} \mathbb{S}(\vr, \vt, \Du) : \Du $ and
$(\vr, \vt, \DT) \mapsto \frac{ \kappa (\vr, \vt) }{\vt^2} |\DT|^2$ are defined as
\[
\begin{split}
\frac{1}{\vt} \mathbb{S}(\vr, \vt, \Du) : \Du &= \lim_{\ep \to 0+}\frac{1}{\vt + \ep} \mathbb{S}(\vr, \vt, \Du) : \Du \in [0, \infty],\\
\frac{ \kappa (\vr, \vt) }{\vt^2} |\DT|^2 &=\lim_{\ep \to 0+ } \frac{ \kappa (\vr, \vt) }{(\vt + \ep)^2} |\DT|^2 \in [0, \infty]
\end{split}
\]
for any $\vt \in [0, \infty)$.

\end{Remark}

\subsection{Compatibility conditions}

Besides $(\ref{G4})$, further \emph{compatibility conditions} will be imposed on the measure--valued solution $\{ \Nu \}_{(t,x) \in Q_T}$. Note that
all compatibility conditions specified below (including  $(\ref{G4})$) are automatically satisfied as long as the parameterized measure $\{ \Nu \}_{(t,x) \in Q_T}$ is a Young measure generated by a family
$\{ \vre, \vte, \vue, \mathbb{D} [\Grad \vue], \Grad \vte \}_{\ep > 0}$ of functions satisfying the bounds
\begin{equation} \label{B}
\begin{split}
{\rm ess} &\sup_{t \in (0,T)} \intO{ \Big[ \vre + \vre |\vue|^2 + \vre e(\vre, \vte) + \vre |s(\vre, \vte)| \Big] } \\
&+ \int_0^T \intO{ \left[ \frac{\mu(\vre, \vte)}{2 \vte}\left| \Grad \vue + \Grad^t \vue - \frac{2}{N} \Div \vue \mathbb{I} \right|^2 +
\frac{\lambda (\vre, \vte)}{\vte}|\Div \vue |^2 \right] } \dt \\ &+
\int_0^T \intO{ \kappa (\vre, \vte) |\Grad \log(\vte)|^2 } \dt
\aleq 1
\end{split}
\end{equation}
uniformly for $\ep \to 0$.

Here and hereafter, the symbol $a \aleq b$ means $a \leq cb$ for a certain constant $c>0$.

\subsubsection{Concentration defect}
\label{CDC}

Set
\[
\mathcal{D}(\tau) = \intO{ \left< \mathcal{V}_{0,x}; \frac{1}{2} \vr |\vu|^2 + \vr e(\vr, \vt) \right> } -
\intO{ \left< \mathcal{V}_{\tau,x}; \frac{1}{2} \vr |\vu|^2 + \vr e(\vr, \vt) \right> }.
\]
The total energy balance (\ref{G7}) implies that $\mathcal{D} \geq 0$ a.a. in $(0,T)$. Similarly to
\cite{BreFei17}, we require
\begin{equation} \label{G11}
\int_0^T  \psi(t) \int_{\Ov{\Omega}} {\rm d}|\nu_C|   \aleq \int_0^T \psi(t) \mathcal{D}(t) \ \dt
\ \mbox{for any}\ \psi \in C[0,T], \ \psi \geq 0
\end{equation}
and $\mathcal{D} \in L^\infty(0,T)$.

\subsubsection{Korn--Poincar\' e inequality}

Let
\[
\mathbb{T}[\mathbb{A}] \equiv \mathbb{A} + \mathbb{A}^t - \frac{2}{N} {\rm tr}[\mathbb{A}] \mathbb{I}.
\]
Keeping in mind (\ref{G4}) we impose an analogue of the Korn--Poincar\' e inequality
\begin{equation} \label{G12}
\begin{split}
\int_0^\tau \intO{ \left< \Nu; \left| \vu - \tvu(t,x) \right|^2 \right> } \dt \aleq
\int_0^\tau \intO{ \left< \Nu; \left| \mathbb{T}[\Du] - \mathbb{T}[\Grad \tvu] \right|^2 \right> } \dt
\end{split}
\end{equation}
for any $\tvu \in L^2(0,T; W^{1,2}_0(\Omega; R^N))$. Note that $\mathbb{T}[\Du] = 2 \left( \Du - \frac{1}{N} {\rm tr}[\Du] \mathbb{I} \right)$.

The following lemma shows that the Korn--Poincar\' e inequality is naturally ``inherited'' by weakly converging Sobolev velocity fields vanishing on the boundary $\partial \Omega$.

\begin{Lemma} \label{L1}
Suppose that $\{ \vc{v}_\ep \}_{\ep > 0}$ is a sequence of functions bounded in $L^2(0,T; W^{1,2}_0(\Omega; R^N))$. Let
$\{ \mathcal{V}_{t,x} \}_{(t,x) \in Q_T}$ be a Young measure on the phase space $[R^N; R^{N \times N}]$ generated by $\{ \vc{v}_\ep , \Grad \vve \}$.
Then the Korn--Poincar\' e inequality (\ref{G12}) holds.

\end{Lemma}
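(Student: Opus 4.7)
The plan is to pass to the limit $\ep \to 0$ in the classical Korn--Poincar\'e inequality applied pointwise in $\ep$ to the Sobolev field $\vve - \tvu$, and then to identify both sides as Young-measure averages.

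At each $\ep$, since $\vve - \tvu \in L^2(0,T; W^{1,2}_0(\Omega; R^N))$ vanishes on $\partial \Omega$, the classical (deterministic) Korn--Poincar\'e inequality on the bounded domain $\Omega$ yields
\[
\int_0^\tau \intO{|\vve - \tvu|^2}\,\dt \leq C(\Omega) \int_0^\tau \intO{\big|\mathbb{T}[\Grad \vve - \Grad \tvu]\big|^2}\,\dt.
\]
To let $\ep \to 0$, I would expand both sides term by term. The linear/cross contributions in $\vve$ or $\Grad\vve$ tested against $\tvu$ pass to the limit by weak-$L^2$ convergence, the weak limits being identified as the barycenters $\langle\Nu;\vu\rangle$ and $\langle\Nu;\Du\rangle$ of the corresponding marginals of $\Nu$; the two barycenters are coupled by the differential relation underlying (G4), which is automatic here because $\vve \in W^{1,2}_0$. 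The pure quadratic contributions pass to the Young-measure averages $\int_0^\tau\intO{\langle\Nu;|\vu|^2\rangle}\,\dt$ and $\int_0^\tau\intO{\langle\Nu;|\mathbb{T}[\Du]|^2\rangle}\,\dt$, respectively, at least via Fatou's lemma for lower semicontinuous nonnegative integrands applied to the convex integrands $v \mapsto |v|^2$ and $\xi \mapsto |\mathbb{T}[\xi]|^2$.

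The main obstacle will be that only one-sided (Fatou) convergence is available for the quadratic gradient term, because $|\Grad\vve|^2$ is merely $L^1$-bounded and may admit a concentration defect; a straightforward Fatou bound then gives the inequality $\liminf \int|\mathbb{T}[\Grad\vve-\Grad\tvu]|^2 \geq \int\langle\Nu;|\mathbb{T}[\Du-\Grad\tvu]|^2\rangle$, which is the wrong direction for transferring the classical estimate directly. To circumvent this I would decompose $\vve = \vv^* + w_\ep$, where $\vv^* := \langle \Nu;\vu\rangle$ is the weak limit in $L^2(0,T; W^{1,2}_0(\Omega; R^N))$ and $w_\ep \rightharpoonup 0$. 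Applying the classical Korn--Poincar\'e inequality to $\vv^* - \tvu$ handles the barycenter piece deterministically; applying it to the fluctuation $w_\ep$ and using the variance identities
\[
\langle\Nu;|\vu-\vv^*|^2\rangle = \langle\Nu;|\vu|^2\rangle - |\vv^*|^2, \qquad
\langle\Nu;|\mathbb{T}[\Du-\Grad\vv^*]|^2\rangle = \langle\Nu;|\mathbb{T}[\Du]|^2\rangle - |\mathbb{T}[\Grad\vv^*]|^2
\]
reduces the fluctuation contribution to a variance comparison that is compatible with the limits of both sides. Summing the two bounds then yields (G12) with a constant depending only on $\Omega$.
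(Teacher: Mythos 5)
Your proposal identifies the right obstacle — the concentration defect on the gradient side, since $|\Grad\vve|^2$ is only $L^1$-bounded — but the decomposition $\vve = \vv^* + w_\ep$ does not actually remove it. Applying the classical Korn--Poincar\'e inequality to the fluctuation $w_\ep$ gives
\[
\int_0^\tau\intO{|w_\ep|^2}\,\dt \aleq \int_0^\tau\intO{\big|\mathbb{T}[\Grad w_\ep]\big|^2}\,\dt,
\]
and in the limit the right--hand side equals $\int_0^\tau\intO{\left<\Nu;|\mathbb{T}[\Du - \Grad\vv^*]|^2\right>}\,\dt + D_2$ with a nonnegative concentration defect $D_2$; since that defect sits on the \emph{unfavorable} side of the inequality, it cannot be discarded, and the variance comparison you need (which is precisely (\ref{G12a})) does not follow. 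The variance identity and the compatibility condition (\ref{G4}) only take care of the barycenter piece; the core difficulty, that $\{|\Grad w_\ep|^2\}$ is not uniformly integrable, is inherited by $w_\ep$ unchanged.

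The paper circumvents this by a genuinely different device: it applies the $L^\beta$ Korn--Poincar\'e inequality with an exponent $\beta\in(1,2)$. Then $\{|\vve|^\beta\}$ and $\{|\Grad\vve|^\beta\}$ are bounded in $L^{2/\beta}$ with $2/\beta>1$, hence equi-integrable, so the Young-measure representation of the limit is an equality on \emph{both} sides (no Fatou loss, no concentration defect). This yields
\[
\int_0^\tau \intO{ \left< \Nu ; |\vv |^\beta \right> } \dt
 \aleq \int_0^\tau \intO{ \left< \Nu; \left|\mathbb{D}_v  - \tfrac{1}{N} {\rm tr}[ \mathbb{D}_v ] \mathbb{I} \right|^\beta \right> } \dt
\]
for each $\beta<2$, and then $\beta\nearrow 2$ with monotone convergence gives (\ref{G12}). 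You should replace the decomposition step by this sub-quadratic exponent trick; otherwise the argument has a gap.
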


\begin{proof}

As the fields $\vve$ vanish on $\partial \Omega$, they can be extended to be zero outside $\Omega$. Applying the standard Korn and Poincar\' e inequalities
we get
\[
\intO{ |\vve |^\beta } \aleq  \intO{ |\Grad \vve |^\beta } \aleq \intO{ |\Grad \vve + \Grad^t \vve - \frac{2}{N} \Div \vve \mathbb{I} |^\beta }
\]
uniformly in $\ep$ for any $1 < \beta < 2$.

Integrating in time and
letting $\ep \to 0$ we obtain
\[
\begin{split}
\int_0^\tau \intO{ \left< \Nu ; |\vv |^\beta \right> } \dt
 \aleq \int_0^\tau \intO{ \left< \Nu; |\mathbb{D}_v  - \frac{1}{N} {\rm tr}[ \mathbb{D}_v ] \mathbb{I} |^\beta \right> } \dt.
\end{split}
\]

We conclude the proof by letting $\beta \nearrow 2$ and Monotone convergence theorem.
\end{proof}

It is interesting to see that (\ref{G12}) follows from a weaker stipulation:
\begin{equation} \label{G12a}
\int_0^\tau \intO{ \left< \Nu; \left| \vu - \left< \Nu; \vu \right> \right|^2 \right> } \dt
\aleq
\int_0^\tau \intO{ \left< \Nu; \left| \mathbb{T}[\Du] - \left< \Nu; \mathbb{T}[\Du] \right> \right|^2 \right> } \dt,
\end{equation}
cf. a similar hypothesis in \cite{FGSWW1}. Indeed we have
\[
\begin{split}
\int_0^\tau &\intO{ \left< \Nu; \left| \vu - \tvu(t,x) \right|^2 \right> } \dt \\
&\aleq \int_0^\tau \intO{ \left< \Nu; \left| \vu - \left< \Nu; \vu \right> \right|^2 \right> } \dt
+ \int_0^\tau \intO{ \left< \Nu; \left| \left< \Nu; \vu \right> - \tvu(t,x) \right|^2 \right> } \dt\\
&= \int_0^\tau \intO{ \left< \Nu; \left| \vu - \left< \Nu; \vu \right> \right|^2 \right> } \dt
+ \int_0^\tau \intO{ \left| \left< \Nu; \vu \right> - \tvu(t,x) \right|^2  } \dt
\\
&\aleq \int_0^\tau \intO{ \left< \Nu; \left| \mathbb{T}[\Du] - \left< \Nu; \mathbb{T}[\Du] \right> \right|^2 \right> } \dt + \int_0^\tau \intO{ \left| \left< \Nu; \vu \right> - \tvu(t,x) \right|^2  } \dt.
\end{split}
\]
Finally, by the standard Korn--Poincar\' e inequality and (\ref{G4}),
\[
\int_0^\tau \intO{ \left| \left< \Nu; \vu \right> - \tvu(t,x) \right|^2  } \dt \aleq
\int_0^\tau \intO{ \left| \left< \Nu; \mathbb{T}[\Du] \right> - \mathbb{T}[\Grad \tvu](x) \right|^2 } \dt ;
\]
whence (\ref{G12}) follows as
\[
\left< \Nu; \left| \mathbb{T}[\Du] - \left< \Nu; \mathbb{T}[\Du] \right> \right|^2 \right> +
\left| \left< \Nu; \mathbb{T}[\Du] \right> - \mathbb{T}[\Grad \tvu](x) \right|^2 =
\left< \Nu; \left| \mathbb{T}[\Du] - \mathbb{T}[\Grad \tvu](x)  \right|^2 \right>.
\]

\subsection{Dissipative measure--valued solutions}

We are ready to introduce the concept of \emph{dissipative measure--valued (DMV) solutions} to the Navier--Stokes--Fourier system.

\begin{Definition} \label{D1}

We say that a parameterized family of probability measures $\{ \Nu \}_{(t,x) \in Q_T}$ is a \emph{dissipative measure--valued (DMV) solution}
to the Navier--Stokes--Fourier system (\ref{N2}--\ref{N7}), with the initial conditions $\{ \mathcal{V}_{0,x} \}_{x \in \Omega}$ if the following holds:

\begin{itemize}

\item $\{ \Nu \}_{(t,x) \in Q_T}$ is a weakly-(*) measurable family mapping the physical space $Q_T$ in the set of probability measures on
the phase space $\mathcal{F}$ satisfying the compatibility condition (\ref{G4}).

\item The integral identities (\ref{G5}--\ref{G7}), (\ref{G10}) hold true.

\item $\{ \Nu \}_{(t,x) \in Q_T}$ complies with the compatibility conditions (\ref{G11}), (\ref{G12a}).

\end{itemize}

\end{Definition}

\begin{Remark} \label{R22}

Alternatively, a DMV solution could be defined as a Young measure $\{ \Nu \}_{(t,x) \in Q_T}$ associated to a family $\{ \vre, \vte, \vue, \mathbb{D}[\Grad \vue], \Grad \vte \}_{\ep > 0}$
satisfying the uniform bound (\ref{B}). Then it is sufficient to require $\{ \Nu \}_{(t,x) \in Q_T}$ to fulfill the integral identities (\ref{G5}--\ref{G7}), (\ref{G10}). As already pointed out such a definition might be more restrictive than Definition \ref{D1} but provide better information concerning
the properties of $\{ \Nu \}_{(t,x) \in Q_T}$. We come back to this issue in the concluding Section \ref{CC}.

\end{Remark}

\section{Relative energy inequality}
\label{r}

Let
\[
H_{\tvt} = \vr \left[ e(\vr, \vt) - \tvt s(\vr, \vt) \right]
\]
be the ballistic free energy, cf. Ericksen \cite{Eri}.
Motivated by \cite{FeiNov10}, we introduce the relative energy
\begin{equation} \label{r1}
\begin{split}
\mathcal{E} &\left( \vr, \vt, \vu \Big|, \tvr, \tvt, \tvu \right)
\equiv \frac{1}{2} \vr |\vu - \tvu|^2 + \frac{\partial H_{\tvt} (\tvr, \tvt) }{\partial \vr}(\vr - \tvr) -
H_{\tvt} (\tvr, \tvt)\\&=
\left[ \frac{1}{2} \vr |\vu|^2 + \vr e(\vr, \vt) \right] - \left[ \frac{1}{2} \tilde \vr |\tilde \vu|^2 +
\tilde \vr e(\tilde \vr, \tilde \vt) \right] + \frac{1}{2} \tilde \vr |\tilde \vu|^2  - \vr \vu \cdot \tilde \vu + \frac{1}{2} \vr |\tilde \vu|^2 \\ &-
\tilde{\vt} \left( \vr s(\vr, \vt) - \tilde{\vr} s(\tilde \vr, \tilde \vt) \right) - \left( e(\tilde \vr, \tilde \vt) - \tilde \vt s( \tilde \vr, \tilde \vt) + \frac{p(\tilde \vr, \tilde \vt)}{\tilde \vr} \right) (\vr - \tilde \vr),
\end{split}
\end{equation}
where
the last equality was shown in \cite[Appendix]{BreFei17B}. It is
worth noting that $\mathcal{E}$ differs from the \emph{relative entropy} introduced in the classical paper by
Dafermos \cite{Daf4} by a multiplicative factor $\tvt$, see \cite{BreFei17B}. The advantage of working with relative energy rather than entropy is that
the present setting is compatible with the weak formulation (\ref{G5}--\ref{G10}).

Given a DMV solution $\{ \Nu \}_{(t,x) \in Q_T}$ of the Navier--Stokes--Fourier system, we consider
the quantity
\[
\intO{ \left< \Nu; \mathcal{E} \left( \vr, \vt, \vu \Big|, \tvr, \tvt, \tvu \right) \right> }.
\]
After a short inspection of all terms on the right--hand side of (\ref{r1}), it is easy to see that
\[
\left[ \intO{ \left< \Nu; \mathcal{E} \left( \vr, \vt, \vu \Big|, \tvr, \tvt, \tvu \right) \right> } \right]_{t = 0}^{t= \tau}
\]
can be expressed by means of the integral identities (\ref{G5}--\ref{G10}) as soon as the functions $(\tvr, \tvt, \tvu)$ are smooth and satisfy the
``compatibility conditions''
\[
\tvr>0, \ \tvt > 0, \ \tvu|_{\partial \Omega} = 0.
\]
After a bit tedious but straightforward manipulation that has been performed in detail in \cite{BreFei17} and \cite{FeiNov10}, we deduce from (\ref{r1}) and (\ref{G5}--\ref{G10}) the relative energy inequality:
\begin{equation} \label{r2}
\begin{split}
\Big[ &\intO{ \left< \Nu; \mathcal{E} \left( \vr, \vt, \vu \Big|, \tvr(t,x), \tvt(t,x), \tvu(t,x) \right) \right> } \Big]_{t = 0}^{t= \tau} + \mathcal{D}(\tau) \\
&+ \int_0^\tau \intO{ \left[ \left< \Nu; \frac{\tvt(t,x)}{\vt} \mathbb{S}(\vr, \vt, \Du) : \Du \right> + \tvt(t,x) \left< \Nu; \frac{\kappa (\vr, \vt)}{\vt^2} |\DT|^2
\right> \right] } \dt  \\
&- \int_0^\tau  \intO{ \left< \Nu; \mathbb{S}(\vr, \vt, \Du) \right> : \Grad \tvu(t,x)  } \dt
\\
&\leq- \int_0^\tau \intO{ \left[ \left< \Nu ; \vr s(\vr, \vt) \right> \partial_t \tilde \vt(t,x) +
\left< \Nu; \left( \vr s(\vr, \vt) \vu - \frac{\kappa(\vr, \vt)}{\vt} \DT \right) \right> \cdot \Grad \tilde \vt(t,x) \right] } \dt\\
&+ \int_0^\tau \intO{ \Big[ \left< \Nu ; \vr (\tvu (t,x)- \vu) \right> \cdot \partial_t \tilde \vu (t,x) +
\left< \Nu ; { \vr (\tvu(t,x) - \vu ) \otimes \vu }
\right> : \Grad \tilde \vu(t,x) \Big] } \dt \\ &- \int_0^\tau \intO{ \left< \Nu; p(\vr, \vt) \right> \Div \tvu(t,x) } \dt \\
&+ \int_0^\tau \intO{ \left[ \left< \Nu ; \vr \right> \partial_t \tvt(t,x) s(\tvr, \tvt)(t,x) + \left< \Nu ; \vr \vu \right> \cdot \Grad \tvt(t,x) s(\tvr, \tvt) (t,x) \right] } \dt\\
&+ \int_0^\tau \intO{ \left[ \left< \Nu ; \tvr(t,x) - \vr \right> \frac{1}{\tvr} \partial_t p (\tvr, \tvt)(t,x) - \left< \Nu ; \vr \vu \right> \cdot \frac{1}{\tvr} \Grad p(\tvr, \tvt)(t,x) \right] } \dt\\
&+ \int_0^\tau \int_{\Ov{\Omega}} \Grad \tvu(t,x) : {\rm d} \nu_C
\end{split}
\end{equation}
for any
\begin{equation} \label{r3}
\tvr, \tvt \in C^1(\Ov{\Omega}), \ \tvr, \tvt > 0, \ \tvu \in C^1([0,T] \times \Ov{\Omega}; R^N), \ \tvu|_{\partial \Omega} = 0.
\end{equation}

\section{Weak vs. strong solutions}
\label{w}

Following the approach of \cite{FeiNov10} we use the relative energy to compare a (DMV) solution with a strong solution starting from the same initial data. To this end, we introduce a cut--off function
\[
\psi_\delta \in \DC((0, \infty)^2), \ 0 \leq \psi_\delta \leq 1,\ \psi_\delta (\vr, \vt) = 1
\ \mbox{whenever}\ \delta < \vr < \frac{1}{\delta}\mbox{ and }  \delta < \vt < \frac{1}{\delta} \mbox{ for some } \delta >0.
\]
For $h = h(\vr, \vt, \vu, \Du, \DT)$ we consider a decomposition
\[
h = h_{\rm ess} + h_{\rm res}, \ h_{\rm ess} = \psi_\delta(\vr, \vt) h(\vr, \vt, \vu, \Du, \DT) , \ h_{\rm res} = (1 - \psi_\delta (\vr, \vt) ) h(\vr, \vt, \vu, \Du, \DT).
\]

Suppose that the thermodynamic functions are interrelated through (\ref{N1}), and, in addition, satisfy the \emph{hypothesis of thermodynamic stability}:
\begin{equation} \label{w1-}
\frac{\partial p(\vr, \vt)}{\partial \vr} > 0, \ \frac{\partial e(\vr, \vt)}{\partial \vt} > 0 \ \mbox{for any}\ \vr, \vt > 0.
\end{equation}
Under these circumstances,
it was shown in \cite[Chapter 3, Proposition 3.2]{FeNo6} that
\begin{equation} \label{w1}
\begin{split}
\mathcal{E}&\left(\vr, \vt, \vu \Big| \tvr, \tvt, \tvu \right) \\ &\geq c(\delta) \left\{
\left[ |\vr - \tvr|^2 + |\vt - \tvt|^2 + |\vu - \tvu|^2 \right]_{\rm ess}
+ \Big[ 1 + \vr + \vr|s(\vr,\vt)| + \vr e(\vr, \vt) + \vr |\vu|^2 \Big]_{\rm res}  \right\}
\end{split}
\end{equation}
whenever $2 \delta < \tvr < \frac{1}{\delta} - \delta$, $2 \delta <  \tvt < \frac{1}{\delta} - \delta$ for some $\delta > 0$. Accordingly, the function $\mathcal{E}\left(\vr, \vt, \vu \Big| \tvr, \tvt, \tvu \right)$ can be seen as a kind of distance between the triples $[\vr, \vt, \vu]$ and $[\tvr, \tvt, \tvu]$.

\subsection{Comparison between strong and (DMV) solutions}

Let $[\tvr, \tvt, \tvu]$ be a strong (classical) solution of the Navier--Stokes--Fourier system (\ref{N1}--\ref{N7}) on a time interval $[0,T]$. More specifically, the functions $[\tvr, \tvt, \tvu]$ belong to the class
\[
\tvr,\  \tvt  \in C^1 ([0,T] \times \Ov{\Omega}),\ \tvu \in C^1([0,T] \times \Ov{\Omega}; R^N), \ D^2_x \tvt,\ D^2_x \tvu
\in C([0,T] \times \Ov{\Omega})
\]
and satisfy (\ref{N1}--\ref{N7}), and (\ref{r3}). Set
\[
\vr_0(x) = \tvr (0, x), \ \vt_0 (x) = \tvt(0,x), \ \vu_0(x) = \tvu(0,x), \ x \in \Omega
\]
and consider the initial data for (DMV) solutions in the form
\begin{equation} \label{w2}
\mathcal{V}_{0,x} = \delta_{ \vr_0(x),  \vt_0(x), \vu_0(x) } , \ x \in \Omega.
\end{equation}
In what follows, we use the relative energy inequality (\ref{r2}) to compare a (DMV) solution $\{ \Nu \}_{(t,x) \in Q_T}$
emanating from the initial data (\ref{w2})
with the strong solution
$[\tvr, \tvt, \tvu]$. In view of (\ref{w1}), we fix $\delta > 0$ small enough so that
\[
0 < 2 \delta < \tvr(t,x), \tvt(t,x) < \frac{1}{\delta} - \delta \ \mbox{for all}\ (t,x) \in \Ov{Q}_T.
\]

Setting
\[
\mathcal{H}(\tau) \equiv \intO{ \left< \mathcal{V}_{\tau,x}; \mathcal{E} \left( \vr, \vt, \vu \Big|, \tvr(\tau,x), \tvt(\tau,x), \tvu(\tau,x) \right) \right> }  + \mathcal{D}(\tau),
\]
where $\mathcal{D}$ is the concentration defect identified in Section \ref{CDC},
and using (\ref{w2}), together with the compatibility condition (\ref{G11}), we deduce from (\ref{r2}) that
\begin{equation} \label{w3}
\begin{split}
\mathcal{H}(\tau) &+ \int_0^\tau \intO{ \left[ \left< \Nu; \frac{\tvt}{\vt} \mathbb{S}(\vr, \vt, \Du) : \Du \right> + \tvt \left< \Nu; \frac{\kappa (\vr, \vt)}{\vt^2} |\DT|^2
\right> \right] } \dt  \\
&- \int_0^\tau  \intO{ \left< \Nu; \mathbb{S}(\vr, \vt, \Du) \right> : \Grad \tvu  } \dt -
\int_0^\tau \intO{ \left< \Nu; \frac{\kappa(\vr, \vt)}{\vt} \DT \right> \cdot \Grad \tvt } \dt
\\
&\aleq- \int_0^\tau \intO{ \left[ \left< \Nu ; \vr s(\vr, \vt) \right> \partial_t \tilde \vt +
\left< \Nu; \vr s(\vr, \vt) \vu  \right> \cdot \Grad \tilde \vt \right] } \dt\\
&+ \int_0^\tau \intO{ \Big[ \left< \Nu ; \vr (\tvu - \vu) \right> \cdot \partial_t \tilde \vu  +
\left< \Nu ; { \vr (\tvu - \vu ) \otimes \vu }
\right> : \Grad \tilde \vu \Big] } \dt \\ &- \int_0^\tau \intO{ \left< \Nu; p(\vr, \vt) \right> \Div \tvu } \dt \\
&+ \int_0^\tau \intO{ \left[ \left< \Nu ; \vr \right> \partial_t \tvt s(\tvr, \tvt) + \left< \Nu ; \vr \vu \right> \cdot \Grad \tvt s(\tvr, \tvt)  \right] } \dt\\
&+ \int_0^\tau \intO{ \left[ \left< \Nu ; \tvr - \vr \right> \frac{1}{\tvr} \partial_t p (\tvr, \tvt) - \left< \Nu ; \vr \vu \right> \cdot \frac{1}{\tvr} \Grad p(\tvr, \tvt) \right] } \dt\\
&+ \int_0^\tau \mathcal{H}(t) \ \dt.
\end{split}
\end{equation}
The constant hidden in $\aleq$ may depend on the norm of the strong solution $[\tvr, \tvt, \tvu]$. Our goal in the remaining part of this section is to use the fact that $[\tvr, \tvt, \tvu]$ represent a strong solution of the Navier--Stokes--Fourier system to
rewrite (\ref{w3}) in a more concise form so that a Gronwall type argument may be applied to deduce $\mathcal{H} = 0$.

\subsubsection{Step 1}

Using the momentum equation
\[
\partial_t \tvu + \tvu \cdot \Grad \tvu = - \frac{1}{\tvr} \Grad p(\tvr, \tvt) + \frac{1}{\tvr} \Div \mathbb{S}(\tvr, \tvt, \Grad \tvu)
\]
we deduce
\[
\begin{split}
\Big< \Nu ; \vr &(\tvu - \vu) \Big> \cdot \partial_t \tilde \vu  +
\left< \Nu ; { \vr (\tvu - \vu ) \otimes \vu }
\right> : \Grad \tilde \vu \\ &=
\left< \Nu ; \vr (\tvu - \vu) \right> \cdot \left( \partial_t \tilde \vu + \tvu \cdot \Grad \tvu \right)  +
\left< \Nu ; { \vr (\tvu - \vu ) \otimes (\vu - \tvu) }
\right> : \Grad \tilde \vu\\
&\leq - \left< \Nu ; \vr (\tvu - \vu) \right> \cdot \left( \frac{1}{\tvr} \Grad p(\tvr, \tvt) - \frac{1}{\tvr} \Div \mathbb{S}(\tvr, \tvt, \Grad \tvu) \right)  +  \left< \Nu; \mathcal{E} \left(\vr, \vt, \vu \Big| \tvr, \tvt, \tvu \right) \right>\\
&= - \left< \Nu ; \vr (\tvu - \vu) \right> \cdot \frac{1}{\tvr} \Grad p(\tvr, \tvt) \\&+
\left< \Nu ; \left( \frac{\vr}{\tvr} - 1 \right) (\tvu - \vu) \right> \cdot \Div \mathbb{S}(\tvr, \tvt, \Grad \tvu)
+ \left< \Nu ; (\tvu - \vu) \right> \cdot \Div \mathbb{S}(\tvr, \tvt, \Grad \tvu)
\\
&+  \left< \Nu; \mathcal{E} \left(\vr, \vt, \vu \Big| \tvr, \tvt, \tvu \right) \right>.
\end{split}
\]

Next, using the compatibility condition (\ref{G4}) we may perform by parts integration
\[
\intO{ \left< \Nu ; (\tvu - \vu) \right> \cdot \Div \mathbb{S}(\tvr, \tvt, \Grad \tvu) } = -
\intO{ \left< \Nu ; (\SGrad [\tvu] - \Du) \right> :\mathbb{S}(\tvr, \tvt, \Grad \tvu) }.
\]

In view of the previous discussion, the relation (\ref{w3}) reduces to
\begin{equation} \label{w5}
\begin{split}
\mathcal{H}(\tau) &+ \int_0^\tau \intO{ \left[ \left< \Nu; \frac{\tvt}{\vt} \mathbb{S}(\vr, \vt, \Du) : \Du \right>
+ \mathbb{S}(\tvr, \tvt, \Grad \tvu ) : \Grad \tvu \right] } \dt  \\
&+
\int_0^\tau \intO{ \tvt  \left< \Nu; \kappa(\vr, \vt) \frac{\DT}{\vt} \cdot \left( \frac{\DT}{\vt} -  \Grad \log(\tvt) \right) \right> } \dt\\
&- \int_0^\tau  \intO{ \left< \Nu; \mathbb{S}(\vr, \vt, \Du) \right> : \Grad \tvu  } \dt
- \int_0^\tau  \intO{ \mathbb{S}(\tvr, \tvt, \Grad \tvu )  : \left< \Nu; \Du \right>  } \dt
\\
&\aleq- \int_0^\tau \intO{ \left[ \left< \Nu ; \vr \left[ s(\vr, \vt) - s(\tvr, \tvt) \right] \right> \partial_t \tilde \vt +
\left< \Nu; \vr \left[ s(\vr, \vt) - s(\tvr, \tvt) \right] \vu  \right> \cdot \Grad \tilde \vt \right] } \dt
\\ &- \int_0^\tau \intO{ \left< \Nu; p(\vr, \vt) \right> \Div \tvu } \dt -
\int_0^\tau \intO{ \left< \Nu; \vr \tvu \right> \cdot \frac{1}{\tvr} \Grad p(\tvr, \tvt) } \dt  \\
&+ \int_0^\tau \intO{ \left< \Nu ; \tvr - \vr \right> \frac{1}{\tvr} \partial_t p (\tvr, \tvt) } \dt\\
&+ \int_0^\tau \intO{ \left< \Nu; |\vr - \tvr |\ |\vu - \tvu | \right> } \dt +\int_0^\tau \mathcal{H}(t) \ \dt,
\end{split}
\end{equation}

\subsubsection{Step 2}

We have
\[
\begin{split}
&\left< \Nu; \vr \left[ s(\vr, \vt) - s(\tvr, \tvt) \right] \vu  \right> \cdot \Grad \tilde \vt =
\left< \Nu; \vr \left[ s(\vr, \vt) - s(\tvr, \tvt) \right]   \right> \tvu \cdot \Grad \tilde \vt \\ &+
\left< \Nu; \vr \left[ s(\vr, \vt) - s(\tvr, \tvt) \right] (\vu - \tvu)  \right> \cdot \Grad \tilde \vt.
\end{split}
\]

Next, we get
\[
\begin{split}
&\intO{ \left< \Nu; \vr \tvu \right> \cdot \frac{1}{\tvr} \Grad p(\tvr, \tvt) } \\ &=
\intO{ \left< \Nu; (\vr - \tvr)  \right> \tvu \cdot \frac{1}{\tvr} \Grad p(\tvr, \tvt) } +
\intO{ \tvu \cdot \Grad p(\tvr, \tvt) }\\
&=
\intO{ \left< \Nu; (\vr - \tvr)  \right> \tvu \cdot \frac{1}{\tvr} \Grad p(\tvr, \tvt) } -
\intO{ \Div \tvu \cdot p(\tvr, \tvt) }.
\end{split}
\]

Consequently, we rewrite (\ref{w5}) as
\begin{equation} \label{w6}
\begin{split}
\mathcal{H}(\tau) &+ \int_0^\tau \intO{ \left[ \left< \Nu; \frac{\tvt}{\vt} \mathbb{S}(\vr, \vt, \Du) : \Du \right>
+ \mathbb{S}(\tvr, \tvt, \Grad \tvu ) : \Grad \tvu \right] } \dt  \\
&+
\int_0^\tau \intO{ \tvt  \left< \Nu; \kappa(\vr, \vt) \frac{\DT}{\vt} \cdot \left( \frac{\DT}{\vt} -  \Grad \log(\tvt) \right) \right> } \dt\\
&- \int_0^\tau  \intO{ \left< \Nu; \mathbb{S}(\vr, \vt, \Du) \right> : \Grad \tvu  } \dt
- \int_0^\tau  \intO{ \mathbb{S}(\tvr, \tvt, \Grad \tvu )  : \left< \Nu; \Du \right>  } \dt
\\
&\aleq- \int_0^\tau \intO{  \left< \Nu ; \tvr \left[ s(\vr, \vt) - s(\tvr, \tvt) \right]_{\rm ess} \right> \left( \partial_t \tilde \vt +
 \tvu \cdot \Grad \tilde \vt \right)  } \dt
\\&+ \int_0^\tau \intO{ \left< \Nu; p(\tvr, \tvt) - p(\vr, \vt) \right> \Div \tvu } \dt   \\
&+ \int_0^\tau \intO{ \left< \Nu ; \tvr - \vr \right> \frac{1}{\tvr} \left( \partial_t p (\tvr, \tvt) + \tvu \cdot \Grad p(\tvr, \tvt) \right) } \dt\\
&+ \int_0^\tau \intO{ \left[ \left< \Nu; |\vr - \tvr |\ |\vu - \tvu | \right>
+ \left< \Nu; \vr \left| s(\vr, \vt) - s(\tvr, \tvt) \right| |\vu - \tvu|  \right> \right] }
\dt\\&+ \int_0^\tau \intO{ \left< \Nu; \vr \left| \left[ s(\vr, \vt) - s(\tvr, \tvt) \right]_{\rm res} \right| \right> } \dt \\&+ \int_0^\tau \mathcal{H}(t) \ \dt,
\end{split}
\end{equation}

\subsubsection{Step 3}

Using the equation of continuity (\ref{N2}), Gibbs' relation (\ref{N1}), and the entropy balance (\ref{N9}), (\ref{N10}), we deduce
\[
\begin{split}
\Big( p(\tvr, \tvt) &- p(\vr, \vt) \Big) \Div \tvu + (\tvr - \vr ) \frac{1}{\tvr} \left( \partial_t p (\tvr, \tvt) + \tvu \cdot \Grad p(\tvr, \tvt) \right)\\
&= \left( p(\tvr, \tvt) - \frac{\partial p(\tvr, \tvt) }{\partial \vr} (\tvr - \vr) - \frac{\partial p(\tvr, \tvt) }{\partial \vt} (\tvt - \vt)- p(\vr, \vt) \right) \Div \tvu\\ &- \left[
\tvr \frac{\partial s(\tvr, \tvt) }{\partial \vr} (\tvr - \vr) + \tvr \frac{\partial s(\tvr, \tvt) }{\partial \vt} (\tvt - \vt)
\right] \left( \partial_t \tilde \vt +
 \tvu \cdot \Grad \tilde \vt \right)\\
&+(\tvt - \vt) \Div \left( \kappa(\tvr, \tvt) \Grad \log \tvt \right) + \left( 1 - \frac{\vt}{\tvt} \right)
\left( \mathbb{S}(\tvr, \tvt, \Grad \tvu) : \Grad \tvu \right) + (\tvt - \vt) \kappa(\tvr, \tvt) |\Grad \log \tvt|^2.
\end{split}
\]
Plugging this in (\ref{w6}) we conclude
\begin{equation} \label{w7}
\begin{split}
\mathcal{H}(\tau) &+ \int_0^\tau \intO{ \left[ \left< \Nu; \frac{\tvt}{\vt} \mathbb{S}(\vr, \vt, \Du) : \Du \right>
+ \left< \Nu; \frac{\vt}{\tvt} \right> \mathbb{S}(\tvr, \tvt, \Grad \tvu ) : \Grad \tvu \right] } \dt  \\
&+
\int_0^\tau \intO{ \tvt  \left< \Nu; \kappa(\vr, \vt) \frac{\DT}{\vt} \cdot \left( \frac{\DT}{\vt} -  \Grad \log(\tvt) \right) \right> } \dt \\
&+ \int_0^\tau \intO{ \kappa (\tvr, \tvt) \Grad \log(\tvt) \cdot \left< \Nu; \vt \left( \Grad \log(\tvt) - \frac{\DT}{\vt} \right)  \right>   } \dt \\
&- \int_0^\tau  \intO{ \left< \Nu; \mathbb{S}(\vr, \vt, \Du) \right> : \Grad \tvu  } \dt
- \int_0^\tau  \intO{ \mathbb{S}(\tvr, \tvt, \Grad \tvu )  : \left< \Nu; \Du \right>  } \dt
\\
&\aleq \int_0^\tau \intO{ \left< \Nu; \left| p(\tvr, \tvt) - \frac{\partial p(\tvr, \tvt) }{\partial \vr} (\tvr - \vr) - \frac{\partial p(\tvr, \tvt) }{\partial \vt} (\tvt - \vt)- p(\vr, \vt) \right| \right> } \dt \\
&+ \int_0^\tau \intO{ \left[ \left< \Nu; |\vr - \tvr |\ |\vu - \tvu | \right>
+ \left< \Nu; \vr \left| s(\vr, \vt) - s(\tvr, \tvt) \right| |\vu - \tvu|  \right> \right] }
\dt\\
&+ \int_0^\tau \intO{ \left< \Nu;  \Big[ \vt +  \vr \left| \left[ s(\vr, \vt) - s(\tvr, \tvt) \right] \right| \Big]_{\rm res} \right> } \dt \\&+ \int_0^\tau \mathcal{H}(t) \ \dt.
\end{split}
\end{equation}

Finally, as a consequence of (\ref{w1}), we have
\begin{equation} \label{w8}
\begin{split}
\int_0^\tau &\intO{ \left< \Nu; \left| p(\tvr, \tvt) - \frac{\partial p(\tvr, \tvt) }{\partial \vr} (\tvr - \vr) - \frac{\partial p(\tvr, \tvt) }{\partial \vt} (\tvt - \vt)- p(\vr, \vt) \right| \right> } \dt \\
&+ \int_0^\tau \intO{ \left[ \left< \Nu; |\vr - \tvr |\ |\vu - \tvu | \right>
+ \left< \Nu; \vr \left| s(\vr, \vt) - s(\tvr, \tvt) \right| |\vu - \tvu|  \right> \right] }
\dt\\
&+ \int_0^\tau \intO{ \left< \Nu; \vr \left| \left[ s(\vr, \vt) - s(\tvr, \tvt) \right]_{\rm res} \right| \right> } \dt \\
&\aleq \int_0^\tau \intO{ \left< \Nu; \left[ \vt + |p(\vr, \vt)| + |\vu - \tvu| + \vr |s(\vr, \vt)| \ |\vu| \right]_{\rm res} \right> } \dt + \int_0^\tau \mathcal{H}(t) \ \dt.
\end{split}
\end{equation}
Thus the relations (\ref{w7}), (\ref{w8}) give rise to:
\begin{equation} \label{w9a}
\begin{split}
\mathcal{H}(\tau) &+ \int_0^\tau \intO{ \left[ \left< \Nu; \frac{\tvt}{\vt} \mathbb{S}(\vr, \vt, \Du) : \Du \right>
+ \left< \Nu; \frac{\vt}{\tvt} \right> \mathbb{S}(\tvr, \tvt, \Grad \tvu ) : \Grad \tvu \right] } \dt  \\
&- \int_0^\tau  \intO{ \left< \Nu; \mathbb{S}(\vr, \vt, \Du) \right> : \Grad \tvu  } \dt
- \int_0^\tau  \intO{ \mathbb{S}(\tvr, \tvt, \Grad \tvu )  : \left< \Nu; \Du \right>  } \dt
\\
&+
\int_0^\tau \intO{ \tvt  \left< \Nu; \kappa(\vr, \vt) \frac{\DT}{\vt} \cdot \left( \frac{\DT}{\vt} -  \Grad \log(\tvt) \right) \right> } \dt \\
&+ \int_0^\tau \intO{ \kappa (\tvr, \tvt) \Grad \log(\tvt) \cdot \left< \Nu; \vt \left( \Grad \log(\tvt) - \frac{\DT}{\vt} \right)  \right>   } \dt \\
&\aleq \int_0^\tau \intO{ \left< \Nu; \Big[ \vt + |p(\vr, \vt)| + |\vu - \tvu|+ \vr |s(\vr, \vt)| \ |\vu| \Big]_{\rm res} \right> } \dt + \int_0^\tau \mathcal{H}(t) \ \dt.
\end{split}
\end{equation}

Since  $\mathbb{S}$ is given by Newton's rheological law (\ref{N5}), we have
\[
\begin{split}
\mathbb{S}(\vr, \vt, \Du) : \Du &= \frac{\mu(\vr, \vt)}{2} \mathbb{T}[\Du]: \mathbb{T}[\Du] + \lambda(\vr, \vt) {\rm tr}^2[\Du], \\
\mathbb{S}(\tvr, \tvt, \Grad \tvu) : \Grad \tvu &= \frac{\mu(\tvr, \tvt)}{2} \mathbb{T}[\Grad \tvu]: \mathbb{T}[\Grad \tvu] + \lambda(\tvr, \tvt) (\Div \tvu)^2, \\
\mathbb{S}(\tvr, \tvt, \Grad \tvu) : \Du&= \frac{\mu(\tvr, \tvt)}{2} \mathbb{T}[\Grad \tvu]: \mathbb{T}[\Du] + \lambda(\tvr, \tvt) (\Div \tvu)({\rm tr}[\Du]),\\
 \mathbb{S}(\vr, \vt, \Du) : \Grad \tvu &= \frac{\mu(\vr, \vt)}{2} \mathbb{T}[\Grad \tvu]: \mathbb{T}[\Du] + \lambda(\vr, \vt) (\Div \tvu)({\rm tr}[\Du]),
\end{split}
\]
and hence  we can rewrite \eqref{w9a} as
\begin{equation}\label{w9b}
\begin{split}
\mathcal{H}&(\tau) + \int_0^\tau \intO{ \left[ \left< \Nu; \frac{\tvt \mu(\vr, \vt)}{2 \vt} \mathbb{T}[\Du] : \mathbb{T}[\Du] \right>
+ \left< \Nu; \frac{\vt \mu(\tvr, \tvt) }{2 \tvt} \right> \mathbb{T}[\Grad \tvu ] : \mathbb{T}[\Grad \tvu] \right] } \dt  \\
&- \int_0^\tau  \intO{ \left< \Nu; \frac{\mu (\vr, \vt)}{2} \mathbb{T}[\Du] \right> : \mathbb{T}[\Grad \tvu]  } \dt
- \int_0^\tau  \intO{ \frac{ \mu (\tvr, \tvt) }{2} \mathbb{T}[\Grad \tvu ]  : \left< \Nu; \mathbb{T}[\Du] \right>  } \dt
\\
&+ \int_0^\tau \intO{ \left[ \left< \Nu; \frac{\tvt \lambda(\vr, \vt)}{ \vt} {\rm tr}^2[\Du] \right>
+ \left< \Nu; \frac{\vt \lambda(\tvr, \tvt) }{ \tvt} \right> (\Div \tvu )^2 \right] } \dt  \\
&- \int_0^\tau  \intO{ \left< \Nu; \lambda (\vr, \vt) {\rm tr}[\Du] \right> \Div \tvu  } \dt
- \int_0^\tau  \intO{ \lambda (\tvr, \tvt)  \Div \tvu  \left< \Nu; {\rm tr}[\Du] \right>  } \dt
\\
&+
\int_0^\tau \intO{ \tvt  \left< \Nu; \kappa(\vr, \vt) \left|   \frac{\DT}{\vt} -  \Grad \log(\tvt) \right|^2 \right> } \dt \\
&+ \int_0^\tau \intO{ \kappa (\tvr, \tvt) \Grad \log(\tvt) \cdot \left< \Nu; (\vt - \tvt)\left( \Grad \log(\tvt) - \frac{\DT}{\vt} \right)  \right>   } \dt \\
&- \int_0^\tau \intO{   \Grad \tvt \cdot \left< \Nu; \big(\kappa(\vr, \vt) - \kappa (\tvr, \tvt)\big)\left( \Grad \log(\tvt) - \frac{\DT}{\vt} \right)  \right>   } \dt \\
&\aleq \int_0^\tau \intO{ \left< \Nu; \Big[ \vt + |p(\vr, \vt)| + |\vu - \tvu|+ \vr |s(\vr, \vt)| \ |\vu| \Big]_{\rm res} \right> } \dt + \int_0^\tau \mathcal{H}(t) \ \dt.
\end{split}
\end{equation}
Finally, we rewrite \eqref{w9b} in the form
\begin{equation}\label{w9}
\begin{split}
\mathcal{H}&(\tau) + \int_0^\tau \intO{  \left< \Nu;  \frac{ \mu(\vr, \vt)}{2 } \left| \sqrt{\frac{\tvt}{\vt}}  \mathbb{T}[\Du] - \sqrt{\frac{\vt}{\tvt}} \mathbb{T}[\Grad \tvu]\right|^2 \right>
 } \dt  \\
&- \frac12\int_0^\tau  \intO{ \mathbb{T}[\Grad \tvu] : \left< \Nu; \big(\mu (\vr, \vt) - \mu(\tvr,\tvt)\big)\left(\frac{\vt}{\tvt}   \mathbb{T}[\Grad \tvu] - \mathbb{T}[\Du]\right) \right> } \dt
\\
&+ \int_0^\tau \intO{  \left< \Nu;  \lambda(\vr, \vt) \left|\sqrt{\frac{\tvt }{ \vt}} {\rm tr}[\Du] -  \sqrt{\frac{\vt  }{ \tvt} } \Div \tvu  \right|^2 \right>} \dt  \\
&- \int_0^\tau  \intO{ \Div \tvu  \left< \Nu; \big(  \lambda (\vr, \vt) - \lambda (\tvr, \tvt)  \big)\left(  \frac{\vt}{\tvt} \Div \tvu - {\rm tr}[\Du] \right)  \right>  } \dt
\\
&+
\int_0^\tau \intO{ \tvt  \left< \Nu; \kappa(\vr, \vt) \left|   \frac{\DT}{\vt} -  \Grad \log(\tvt) \right|^2 \right> } \dt \\
&+ \int_0^\tau \intO{ \kappa (\tvr, \tvt) \Grad \log(\tvt) \cdot \left< \Nu; (\vt - \tvt)\left( \Grad \log(\tvt) - \frac{\DT}{\vt} \right)  \right>   } \dt \\
&- \int_0^\tau \intO{   \Grad \tvt \cdot \left< \Nu;\big(\kappa(\vr, \vt) - \kappa (\tvr, \tvt)\big) \left( \Grad \log(\tvt) - \frac{\DT}{\vt} \right)  \right>   } \dt \\
&\aleq \int_0^\tau \intO{ \left< \Nu; \Big[ \vt + |p(\vr, \vt)| + |\vu - \tvu|+ \vr |s(\vr, \vt)| \ |\vu| \Big]_{\rm res} \right> } \dt + \int_0^\tau \mathcal{H}(t) \ \dt.
\end{split}
\end{equation}

In the remaining part of this paper, we apply a Gronwall type argument to (\ref{w9}) to deduce $\mathcal{H} \equiv 0$ - the strong and the (DMV) solutions coincide in $Q_T$. This can be done under additional restrictions imposed either on the (DMV) solution and/or on the constitutive relations.
The first rather easy observation is that the first integral on the right-hand side of (\ref{w9}) vanishes as long as the ``residual'' component is empty.
More specifically, if there exists $\delta > 0$ such that
\[
\Nu \left\{ 0 < \delta < \vr < \frac{1}{\delta},\ 0 < \delta < \vt < \frac{1}{\delta} \right\} = 1
\ \mbox{for a.a.}\ (t,x) \in Q_T.
\]
Under these circumstances,
the left--hand side of (\ref{w9}) can be bounded from below as
\[
\begin{split}
 \int_0^\tau &\intO{  \left< \Nu;  \frac{ \mu(\vr, \vt)}{2 } \left| \sqrt{\frac{\tvt}{\vt}}  \mathbb{T}[\Du] - \sqrt{\frac{\vt}{\tvt}} \mathbb{T}[\Grad \tvu]\right|^2 \right>
 } \dt  \\
&- \frac12\int_0^\tau  \intO{ \mathbb{T}[\Grad \tvu] : \left< \Nu; \big(\mu (\vr, \vt) - \mu(\tvr,\tvt)\big)\left(\frac{\vt}{\tvt}   \mathbb{T}[\Grad \tvu] - \mathbb{T}[\Du]\right) \right> } \dt
\\
&+ \int_0^\tau \intO{  \left< \Nu;  \lambda(\vr, \vt) \left|\sqrt{\frac{\tvt }{ \vt}} {\rm tr}[\Du] -  \sqrt{\frac{\vt  }{ \tvt} } \Div \tvu  \right|^2 \right>} \dt  \\
&- \int_0^\tau  \intO{ \Div \tvu  \left< \Nu; \big(  \lambda (\vr, \vt) - \lambda (\tvr, \tvt)  \big)\left(  \frac{\vt}{\tvt} \Div \tvu - {\rm tr}[\Du] \right)  \right>  } \dt
\\
&+
\int_0^\tau \intO{ \tvt  \left< \Nu; \kappa(\vr, \vt) \left|   \frac{\DT}{\vt} -  \Grad \log(\tvt) \right|^2 \right> } \dt \\
&+ \int_0^\tau \intO{ \kappa (\tvr, \tvt) \Grad \log(\tvt) \cdot \left< \Nu; (\vt - \tvt)\left( \Grad \log(\tvt) - \frac{\DT}{\vt} \right)  \right>   } \dt \\
&- \int_0^\tau \intO{   \Grad \tvt \cdot \left< \Nu;\big(\kappa(\vr, \vt) - \kappa (\tvr, \tvt)\big) \left( \Grad \log(\tvt) - \frac{\DT}{\vt} \right)  \right>   } \dt \\
 &\ageq
- \int_0^\tau \intO{ \left< \Nu; |\vr - \tvr|^2 + |\vt - \tvt|^2 \right> } \dt .
\end{split}
\]
Thus applying Gronwall's lemma to the resulting expression we obtain the following result:

\begin{Theorem} \label{TG1}

Let the transport coefficients $\kappa = \kappa(\vr, \vt)$, $\mu(\vr, \vt)$, and $\lambda(\vr, \vt)$ be continuously differentiable and positive for
$\vr > 0, \ \vt > 0$.
Let the thermodynamic functions $p$, $e$, and $s$ be smooth for $\vr,\ \vt > 0$, satisfying Gibbs' relation (\ref{N1}) and the hypothesis of thermodynamic stability (\ref{w1-}).

Assume that $[\tvr, \tvt, \tvu]$ is a classical solution to the Navier--Stokes--Fourier system
(\ref{N2}--\ref{N10}) in $[0,T] \times \Omega$ emanating from the initial data $[\vr_0, \vt_0, \vu_0]$ with $\vr_0, \vt_0 > 0$ in $\Ov{\Omega}$. Assume further that $\{ \Nu \}_{(t,x) \in Q_T}$ is a (DMV) solution to the same problem in the sense of Definition \ref{D1}  such that
\[
\Nu \left\{ 0 < \delta < \vr < \frac{1}{\delta},\ 0 < \delta < \vt < \frac{1}{\delta} \right\} = 1
\ \mbox{for a.a.}\ (t,x) \in Q_T
\]
for some $\delta > 0$ and
\[
\mathcal{V}_{0,x} = \delta_{ [\vr_0, \vt_0, \vu_0] } \ \mbox{for a.a.}\ x \in \Omega.
\]

Then
\[
\mathcal{V}_{t,x} = \delta_{ [\tvr(t,x), \tvt(t,x), \tvu(t,x), \SGrad [\tvu] (t,x), \Grad \tvt(t,x)] } \ \mbox{for a.a.}\ (t,x) \in Q_T.
\]

\end{Theorem}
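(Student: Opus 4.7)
The plan is to apply Gronwall's lemma to the relative energy inequality (\ref{w9}), exploiting the hypothesis that the measure $\Nu$ concentrates on the set where $\vr$ and $\vt$ remain bounded above and below by positive constants. The first observation is that under this hypothesis every bracket $[\cdot]_{\rm res}$ vanishes $\Nu$--a.s., so the first integral on the right--hand side of (\ref{w9}) is identically zero and only $\int_0^\tau \mathcal{H}(t)\,\dt$ remains there.

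Next, I would scrutinize the left--hand side of (\ref{w9}). The three squared quadratics
\[
\left< \Nu; \tfrac{\mu(\vr, \vt)}{2} \Big| \sqrt{\tfrac{\tvt}{\vt}}\, \mathbb{T}[\Du] - \sqrt{\tfrac{\vt}{\tvt}}\, \mathbb{T}[\Grad \tvu]\Big|^2 \right>,\quad
\left< \Nu; \lambda(\vr,\vt) \Big| \sqrt{\tfrac{\tvt}{\vt}}\,{\rm tr}[\Du] - \sqrt{\tfrac{\vt}{\tvt}}\,\Div\tvu \Big|^2\right>,
\]
and $\tvt\left<\Nu;\kappa(\vr,\vt)|\DT/\vt - \Grad\log(\tvt)|^2\right>$ are non--negative and supply the coercivity. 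The remaining terms on the left--hand side are cross terms driven by $\mu(\vr,\vt)-\mu(\tvr,\tvt)$, $\lambda(\vr,\vt)-\lambda(\tvr,\tvt)$, $\kappa(\vr,\vt)-\kappa(\tvr,\tvt)$ and $\vt - \tvt$. Since the transport coefficients are $C^1$ on $(0,\infty)^2$ and $\Nu$ is supported on a fixed compact subset of $(0,\infty)^2$, all these differences are Lipschitz--bounded by $|\vr-\tvr|+|\vt-\tvt|$. Using the smoothness of $\tvu$ and $\tvt$, the factors $\mathbb{T}[\Grad\tvu]$, $\Div\tvu$ and $\Grad\log\tvt$ are uniformly bounded on $\Ov{Q}_T$, so a Young's inequality with a small parameter $\eta>0$ absorbs $\eta$ times each coercive quadratic back into the left--hand side and leaves a remainder of the form $C_\eta\left<\Nu;|\vr-\tvr|^2+|\vt-\tvt|^2\right>$. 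Identities such as $\tfrac{\vt}{\tvt}\mathbb{T}[\Grad\tvu]-\mathbb{T}[\Du] = -\sqrt{\vt/\tvt}\big(\sqrt{\tvt/\vt}\mathbb{T}[\Du]-\sqrt{\vt/\tvt}\mathbb{T}[\Grad\tvu]\big)$ make the coefficient matching transparent.

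Combining the two steps, (\ref{w9}) reduces to
\[
\mathcal{H}(\tau) \aleq \int_0^\tau \intO{ \left< \Nu; |\vr-\tvr|^2 + |\vt-\tvt|^2 \right> } \dt + \int_0^\tau \mathcal{H}(t)\,\dt.
\]
Since the residual part is empty, the coercivity estimate (\ref{w1}) gives $\left<\Nu;|\vr-\tvr|^2+|\vt-\tvt|^2\right> \aleq \left<\Nu;\mathcal{E}(\vr,\vt,\vu\,|\,\tvr,\tvt,\tvu)\right>\leq \mathcal{H}$, so the two right--hand sides collapse into $\int_0^\tau \mathcal{H}(t)\,\dt$ and Gronwall's lemma forces $\mathcal{H}\equiv 0$ on $[0,T]$. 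From $\mathcal{H}\equiv 0$ together with (\ref{w1}) I would read off that $\Nu$--a.s.\ $\vr=\tvr$, $\vt=\tvt$, $\vu=\tvu$ and that the concentration defect $\mathcal{D}$ vanishes, so $\nu_C\equiv 0$ via (\ref{G11}). Forcing the three coercive quadratics on the left of (\ref{w9}) to vanish simultaneously identifies $\mathbb{T}[\Du]=\mathbb{T}[\Grad\tvu]$, ${\rm tr}[\Du]=\Div\tvu$ (from the $\lambda$--term) and $\DT/\vt=\Grad\log\tvt$; combined with $\vt=\tvt$ and the symmetry of $\Du$, these yield $\Du=\mathbb{D}[\tvu]$ and $\DT=\Grad\tvt$, completing the characterization of $\Nu$ as the claimed Dirac measure.

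The principal technical difficulty lies in the cross--term bookkeeping of the second step: the Young parameters must be chosen small enough not to exhaust any of the three coercive quadratics, while the resulting $|\vr-\tvr|^2+|\vt-\tvt|^2$--remainder must be controllable by $\mathcal{H}$, which is precisely why the boundedness assumption on the support of $\Nu$ is indispensable (through the uniform lower bounds on $\kappa$, $\mu$, $\lambda$ and through the coercivity (\ref{w1})). Once this absorption is executed, the rest reduces to a routine Gronwall argument applied to the already--established relative energy inequality.
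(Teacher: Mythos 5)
Your proposal is correct and follows essentially the same route as the paper: eliminate the residual terms on the right of (\ref{w9}) using the confinement hypothesis, absorb the $C^1$-Lipschitz cross terms involving $\mu(\vr,\vt)-\mu(\tvr,\tvt)$, $\lambda(\vr,\vt)-\lambda(\tvr,\tvt)$, $\kappa(\vr,\vt)-\kappa(\tvr,\tvt)$, and $\vt-\tvt$ into the coercive quadratics via Young's inequality, control the $|\vr-\tvr|^2+|\vt-\tvt|^2$ remainder by $\mathcal{H}$ through (\ref{w1}), and conclude by Gronwall. The paper states the resulting lower bound more tersely, but your Young's inequality bookkeeping and the final reading-off of $\Du=\mathbb{D}[\tvu]$, $\DT=\Grad\tvt$ from the vanishing coercive quadratics simply makes explicit what the paper leaves implicit.
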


\section{Weak--strong uniqueness}
\label{u}

Our goal is to extend validity of Theorem \ref{TG1} to a larger class of (DMV) solutions.
For the sake of simplicity, we restrict ourselves to the case of vanishing bulk viscosity
\begin{equation} \label{u1}
\lambda \equiv 0.
\end{equation}

In the remaining part of this section, we impose a structural restriction
\begin{equation}\label{u2}
|p(\vr, \vt)| \aleq \left( 1 + \vr e(\vr, \vt) + \vr | s(\vr, \vt) | \right).
\end{equation}
Note that this is not a restrictive hypotheses, at least for gases, for which $p \approx \vr e$.
In accordance with (\ref{w1}), (\ref{u1}), (\ref{u2}), relation (\ref{w9}) reduces to
\begin{equation} \label{u3}
\begin{split}
\mathcal{H}&(\tau) + \int_0^\tau \intO{  \left< \Nu;  \frac{ \mu(\vr, \vt)}{2 } \left| \sqrt{\frac{\tvt}{\vt}}  \mathbb{T}[\Du] - \sqrt{\frac{\vt}{\tvt}} \mathbb{T}[\Grad \tvu]\right|^2 \right>
 } \dt  \\
&- \frac12\int_0^\tau  \intO{ \mathbb{T}[\Grad \tvu] : \left< \Nu; \big(\mu (\vr, \vt) - \mu(\tvr,\tvt)\big)\left(\frac{\vt}{\tvt}   \mathbb{T}[\Grad \tvu] - \mathbb{T}[\Du]\right) \right> } \dt
\\
&+
\int_0^\tau \intO{ \tvt  \left< \Nu; \kappa(\vr, \vt) \left|   \frac{\DT}{\vt} -  \Grad \log(\tvt) \right|^2 \right> } \dt \\
&+ \int_0^\tau \intO{ \kappa (\tvr, \tvt) \Grad \log(\tvt) \cdot \left< \Nu; (\vt - \tvt)\left( \Grad \log(\tvt) - \frac{\DT}{\vt} \right)  \right>   } \dt \\
&- \int_0^\tau \intO{   \Grad \tvt \cdot \left< \Nu;\big(\kappa(\vr, \vt) - \kappa (\tvr, \tvt)\big) \left( \Grad \log(\tvt) - \frac{\DT}{\vt} \right)  \right>   } \dt \\
&\aleq \int_0^\tau \intO{ \left< \Nu; \Big[ \vt  + |\vu - \tvu|+ \vr |s(\vr, \vt)| \ |\vu| \Big]_{\rm res} \right> } \dt + \int_0^\tau \mathcal{H}(t) \ \dt.
\end{split}
\end{equation}

\subsection{Conditional results}

We focus on the case with particular transport coefficients:
\begin{equation} \label{u4}
\kappa > 0  \ \mbox{constant}, \ \mu(\vt) = \mu_0 + \mu_1 \vt, \ \mu_0 > 0, \ \mu_1 \geq 0.
\end{equation}
In particular,
\[
\begin{split}
 &\frac{\mu_1}2  \left< \Nu;  \vt \left| \sqrt{\frac{\tvt}{\vt}}  \mathbb{T}[\Du] - \sqrt{\frac{\vt}{\tvt}} \mathbb{T}[\Grad \tvu]\right|^2 \right>
 - \frac{\mu_1}2 \mathbb{T}[\Grad \tvu] : \left< \Nu; (\vt - \tvt)\left(\frac{\vt}{\tvt}   \mathbb{T}[\Grad \tvu] - \mathbb{T}[\Du]\right) \right>
\\
&= \frac{\mu_1}2 \left< \Nu; \tvt \big|  \mathbb{T}[\Du] -  \mathbb{T}[\Grad \tvu]\big|^2 \right> -  \frac{\mu_1}2  \left< \Nu; (\vt - \tvt)\left( \mathbb{T}[\Du] - \mathbb{T}[\Grad \tvu] \right)\right>:\mathbb{T}[\Grad \tvu]
.
\end{split}
\]
Consequently, relation (\ref{u3}) simplifies as
\begin{equation} \label{u5}
\begin{split}
\mathcal{H}&(\tau)+ \int_0^\tau \intO{\left< \Nu; \mu_0 \left| \sqrt{ \frac{\tvt}{\vt} } \mathbb{T}[\Du] - \sqrt{ \frac{\vt}{\tvt} } \mathbb{T}[\Grad \tvu]
\right|^2  \right> } \dt    \\
&+ \int_0^\tau \intO{ \left< \Nu; \mu_1 \Big| \mathbb{T}[\Du] - \mathbb{T}[\Grad \tvu] \Big|^2 +
\kappa \left| \frac{\DT}{\vt} - \Grad \log(\tvt) \right|^2 \right> } \dt\\
&\aleq \int_0^\tau \intO{ \left< \Nu; \Big[ \vt^2 +  |\vu - \tvu|+ \vr |s(\vr, \vt)| \ |\vu| \Big]_{\rm res} \right> } \dt + \int_0^\tau \mathcal{H}(t) \ \dt
\ \mbox{for a.a.}\ \tau \in (0,T).
\end{split}
\end{equation}

\subsubsection{Constant transport coefficients}

We start with the simplest case when $\kappa$ and $\mu$ satisfy (\ref{u4}) with $\mu_1 = 0$.

\begin{Theorem} \label{T1}

Let $\kappa$ and $\mu$ be positive constants.
Let the thermodynamic functions $p$, $e$, and $s$ be smooth functions of $(\vr, \vt)$ satisfying Gibbs' relation (\ref{N1}), the hypothesis of thermodynamic stability (\ref{w1-}), together with (\ref{u2}).

Assume that $[\tvr, \tvt, \tvu]$ is a classical solution to the Navier--Stokes--Fourier system
(\ref{N2}--\ref{N10}) in $[0,T] \times \Omega$ emanating from the initial data $[\vr_0, \vt_0, \vu_0]$ with $\vr_0, \vt_0 > 0$ in $\Ov{\Omega}$. Assume further that $\{ \Nu \}_{(t,x) \in Q_T}$ is a (DMV) solution to the same problem in the sense of Definition \ref{D1}  such that
\begin{equation} \label{u6}
\Nu \left\{ \vt \leq \Ov{\vt}, \ |\vu| \leq \Ov{\vu} \right\} = 1 \ \mbox{for a.a.}\ (t,x) \in Q_T
\end{equation}
for some constants $\Ov{\vt}$, $\Ov{\vu}$ and
\[
\mathcal{V}_{0,x} = \delta_{ [\vr_0, \vt_0, \vu_0] } \ \mbox{for a.a.}\ x \in \Omega.
\]

Then
\[
\mathcal{V}_{t,x} = \delta_{ [\tvr(t,x), \tvt(t,x), \tvu(t,x), \SGrad [\tvu] (t,x), \Grad \tvt(t,x)] } \ \mbox{for a.a.}\ (t,x) \in Q_T.
\]

\end{Theorem}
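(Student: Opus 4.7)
The natural strategy is a Gronwall argument applied to the reduced relative energy inequality $(\ref{u5})$. Since $\mathcal{V}_{0,x} = \delta_{[\vr_0,\vt_0,\vu_0]}$ and $[\tvr,\tvt,\tvu]$ is a strong solution with the same initial data, we have $\mathcal{H}(0) = 0$. The plan is to prove that the right-hand side of $(\ref{u5})$ is dominated by $\int_0^\tau \mathcal{H}(t)\,\dt$, invoke Gronwall's lemma to obtain $\mathcal{H}\equiv 0$ on $(0,T)$, and then read off the Dirac structure of $\Nu$ from the vanishing of $\mathcal{E}$ and of the dissipation integrals.

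The crucial step is the absorption of the three residual terms on the right-hand side of $(\ref{u5})$. Fix $\delta > 0$ small enough that $2\delta < \tvr(t,x),\tvt(t,x) < 1/\delta - \delta$ uniformly on $\Ov{Q}_T$, and moreover $\Ov{\vt} < 1/\delta - \delta$. By $(\ref{u6})$, $\Nu$ is supported on $\{\vt \leq \Ov{\vt},\ |\vu| \leq \Ov{\vu}\}$, and $\tvu$ is uniformly bounded on $\Ov{Q}_T$; consequently, on the support of $\Nu$,
\[
[\vt^2]_{\rm res} \leq \Ov{\vt}^2\,[1]_{\rm res}, \qquad [|\vu - \tvu|]_{\rm res} \aleq [1]_{\rm res}, \qquad [\vr|s(\vr,\vt)|\,|\vu|]_{\rm res} \leq \Ov{\vu}\,[\vr|s(\vr,\vt)|]_{\rm res}.
\]
The coercivity $(\ref{w1})$ gives $[1]_{\rm res} + [\vr|s(\vr,\vt)|]_{\rm res} \aleq \mathcal{E}(\vr,\vt,\vu|\tvr,\tvt,\tvu)$, so integrating against $\Nu$ and over $\Omega$ yields
\[
\intO{\left< \Nu;\ [\vt^2 + |\vu - \tvu| + \vr|s(\vr,\vt)|\,|\vu|]_{\rm res}\right>} \aleq \mathcal{H}(t).
\]
Discarding the nonnegative dissipation integrals on the left of $(\ref{u5})$ produces $\mathcal{H}(\tau) \aleq \int_0^\tau \mathcal{H}(t)\,\dt$, whence $\mathcal{H} \equiv 0$ by Gronwall's lemma.

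From $\mathcal{H}\equiv 0$ the claimed Dirac structure follows in two stages. First, $(\ref{w1})$ forces the residual part of $\Nu$ to carry zero mass and the essential deviations $|\vr-\tvr|$, $|\vt-\tvt|$, $|\vu-\tvu|$ to vanish $\Nu$-a.s., so the $(\vr,\vt,\vu)$-marginal of $\Nu$ is $\delta_{[\tvr(t,x),\tvt(t,x),\tvu(t,x)]}$. Second, revisiting $(\ref{u5})$ with $\mathcal{H}\equiv 0$ forces the two nonnegative dissipation integrals on the left to vanish; since $\vt = \tvt > 0$ $\Nu$-a.s., this yields $\DT = \Grad \tvt$ and $\mathbb{T}[\Du] = \mathbb{T}[\Grad \tvu]$ $\Nu$-a.s. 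The compatibility condition $(\ref{G4})$ then gives $\left< \Nu;\Du\right> = \SGrad[\left<\Nu;\vu\right>] = \SGrad[\tvu]$, which upgrades the traceless identity to $\Du = \SGrad[\tvu]$ $\Nu$-a.s. Finally, $\mathcal{D} \leq \mathcal{H} \equiv 0$ combined with $(\ref{G11})$ forces the concentration measure $\nu_C$ to vanish.

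The main obstacle I anticipate is precisely the residual-term absorption: the $L^\infty$ hypothesis $(\ref{u6})$ is exactly what is required because neither $[\vt^2]_{\rm res}$ nor $[|\vu-\tvu|]_{\rm res}$ can be controlled by $\mathcal{E}$ without an a priori pointwise bound on $\vt$ and $\vu$. A secondary subtlety is that with $\lambda\equiv 0$ and $\mu$ constant the dissipation only controls the deviatoric part $\mathbb{T}[\Du]$, so recovering the full equality $\Du = \SGrad[\tvu]$ has to go through averaging via the compatibility identity $(\ref{G4})$ rather than follow directly from the vanishing of the viscous dissipation.
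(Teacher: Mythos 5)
Your main argument reproduces the paper's proof exactly: hypothesis (\ref{u6}) crushes the velocity and temperature factors in the residual terms of (\ref{u5}) to constants, the remaining $[1 + \vr|s|]_{\rm res}$ is dominated by $\mathcal{E}$ via the coercivity (\ref{w1}), and Gronwall's lemma then yields $\mathcal{H}\equiv 0$ from $\mathcal{H}(0)=0$. This matches the paper step for step.

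Your supplementary paragraph on reading off the Dirac structure, however, contains one step that does not hold as written. With $\lambda\equiv 0$ (in force throughout Section~\ref{u}), the vanishing of the viscous dissipation only forces $\mathbb{T}[\Du]=\mathbb{T}[\Grad\tvu]$ $\Nu$-a.s., i.e.\ only the \emph{traceless} part of $\Du$ is pinned down. You try to upgrade this to $\Du=\SGrad[\tvu]$ $\Nu$-a.s.\ by invoking (\ref{G4}), but (\ref{G4}) is an identity for the \emph{average} $\left<\Nu;\Du\right>$: knowing $\left<\Nu;\mathrm{tr}[\Du]\right>=\Div\tvu$ in no way implies $\mathrm{tr}[\Du]=\Div\tvu$ $\Nu$-a.s. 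A probability measure on $R$ can have a prescribed mean and still be non-degenerate, so the trace component of $\Du$ is left undetermined. This is precisely the subtlety the paper acknowledges in Remark~\ref{RR3}, which says that when $\lambda=0$ one should ``replace $\Du$ by its traceless component'' — the conclusion of the theorem for the $\Du$-slot should be understood modulo the trace, or else some extra input (e.g.\ $\lambda>0$, as in Theorem~\ref{TG1}) is needed to control $\mathrm{tr}[\Du]$. So the ``upgrade'' you propose is the one spot where your reasoning fails; the rest (the Dirac structure of the $(\vr,\vt,\vu)$-marginal from (\ref{w1}), $\DT=\Grad\tvt$ from the vanishing of the heat dissipation together with $\vt=\tvt$, and $\nu_C=0$ from $\mathcal{D}=0$ and (\ref{G11})) is correct.
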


\begin{proof}

By virtue of hypothesis (\ref{u6}), we have
\[
\left< \Nu; \Big[ \vt^2 +  |\vu - \tvu|+ \vr |s(\vr, \vt)| \ |\vu| \Big]_{\rm res} \right> \aleq \left< \Nu; \Big[ 1 + \vr |s(\vr, \vt)|  \Big]_{\rm res} \right>;
\]
whence, in accordance with (\ref{w1}), the inequality (\ref{u5}) reduces to
\[
\begin{split}
\mathcal{H}&(\tau)+ \int_0^\tau \intO{\left< \Nu; \mu_0 \left| \sqrt{ \frac{\tvt}{\vt} } \mathbb{T}[\Du] - \sqrt{ \frac{\vt}{\tvt} } \mathbb{T}[\Grad \tvu]
\right|^2  \right> } \dt
\\ &+ \int_0^\tau \intO{ \left< \Nu;
\kappa \left| \frac{\DT}{\vt} - \Grad \log(\tvt) \right|^2 \right> } \dt
\aleq  \int_0^\tau \mathcal{H}(t) \ \dt
\ \mbox{for a.a.}\ \tau \in (0,T).
\end{split}
\]
Thus the desired conclusion follows by applying the standard Gronwall lemma argument.

\end{proof}

\begin{Remark} \label{RR3}

Note that the conclusions of Theorems \ref{TG1} and \ref{T1} remain valid even if the hypothesis (\ref{G12a}) (the analogue of Korn--Poincar\' e inequality) is omitted. If, in addition, $\lambda = 0$, we may replace $\Du$ by its traceless component $\Du - \frac{1}{N} {\rm tr} [\Du] \mathbb{I}$.
This might be of interest when applying these results to certain numerical schemes for which the Korn--Poincar\' e inequality
does not hold.

\end{Remark}

\subsubsection{Temperature dependent viscosity}

In the case of temperature dependent viscosity, specifically if $\mu_1 > 0$, we may use the Korn--Poincar\' e inequality (\ref{G12}) in order to handle the velocity dependent terms on the right hand side of (\ref{u5}):
\[
\begin{split}
\int_0^\tau \intO{ \left< \Nu, | \ [\vu - \tvu]_{\rm res} \ | \right> } \dt  &\aleq
\int_0^\tau \intO{ \left< \Nu, \left[ \frac{1}{2 \delta} \right]_{\rm res}  + \delta |\vu - \tvu|^2 \right> } \dt \\
&\aleq c(\delta) \int_0^\tau \mathcal{H}(t) \dt + \delta \int_0^\tau \intO{ \left< \Nu;   |\vu - \tvu|^2 \right> } \dt.
\end{split}
\]
In view of (\ref{G12}), the last integral may be absorbed by the left hand side of (\ref{u5}). We thereby obtain:
\begin{equation} \label{u7}
\begin{split}
\mathcal{H}&(\tau)+ \int_0^\tau \intO{\left< \Nu; \mu_0 \left| \sqrt{ \frac{\tvt}{\vt} } \mathbb{T}[\Du] - \sqrt{ \frac{\vt}{\tvt} } \mathbb{T}[\Grad \tvu]
\right|^2  \right> } \dt    \\
&+ \int_0^\tau \intO{ \left< \Nu; \mu_1 \Big| \mathbb{T}[\Du] - \mathbb{T}[\Grad \tvu] \Big|^2 +
\kappa \left| \frac{\DT}{\vt} - \Grad \log(\tvt) \right|^2 \right> } \dt\\
&\aleq \int_0^\tau \intO{ \left< \Nu; \Big[ \vt^2 + \vr |s(\vr, \vt)| \ |\vu| \Big]_{\rm res} \right> } \dt + \int_0^\tau \mathcal{H}(t) \ \dt
\ \mbox{for a.a.}\ \tau \in (0,T).
\end{split}
\end{equation}

\begin{Theorem} \label{T2}
Let $\kappa$ and $\mu$ satisfy (\ref{u4}) with $\mu_1 > 0$.
Let the thermodynamic functions $p$, $e$, and $s$ comply with the perfect gas constitutive relations
\[
p(\vr, \vt) = \vr \vt, \ e(\vr, \vt) = c_v \vt,\ s(\vr, \vt) = \log \left( \frac{\vt^{c_v}}{\vr} \right) , \ c_v > 1.
\]

Assume that $[\tvr, \tvt, \tvu]$ is a classical solution to the Navier--Stokes--Fourier system
(\ref{N2}--\ref{N10}) in $[0,T] \times \Omega$ emanating from the initial data $[\vr_0, \vt_0, \vu_0]$ with $\vr_0, \vt_0 > 0$ in $\Ov{\Omega}$. Assume further that $\{ \Nu \}_{(t,x) \in Q_T}$ is a (DMV) solution to the same problem in the sense of Definition \ref{D1}  such that
\begin{equation} \label{u8}
\Nu \left\{ |s(\vr, \vt)| \leq \Ov{s} \right\} = 1 \ \mbox{for a.a.}\ (t,x) \in Q_T
\end{equation}
for a certain constant $\Ov{s}$ and
\[
\mathcal{V}_{0,x} = \delta_{ [\vr_0, \vt_0, \vu_0] } \ \mbox{for a.a.}\ x \in \Omega.
\]

Then
\[
\mathcal{V}_{t,x} = \delta_{ [\tvr(t,x), \tvt(t,x), \tvu(t,x), \SGrad [\tvu] (t,x), \Grad \tvt(t,x)] } \ \mbox{for a.a.}\ (t,x) \in Q_T.
\]

\end{Theorem}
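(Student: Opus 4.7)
The plan is to start from inequality (\ref{u7}), which has already been derived in the preceding manipulations and is valid under hypothesis (\ref{u4}) with $\mu_1 > 0$. The goal is to absorb the two remaining ``residual'' source terms on the right--hand side into $\mathcal{H}$, after which a standard Gronwall argument concludes the proof exactly as in the proof of Theorem~\ref{T1}. The target estimate is
\[
\int_0^\tau \intO{ \left< \Nu;\Big[ \vt^2 + \vr|s(\vr,\vt)|\,|\vu|\Big]_{\rm res} \right> } \dt \aleq \int_0^\tau \mathcal{H}(t)\,\dt,
\]
which together with (\ref{u7}) yields $\mathcal{H}\equiv 0$ and thus the claimed collapse of $\Nu$ onto the Dirac mass centered at $[\tvr,\tvt,\tvu,\SGrad[\tvu],\Grad\tvt]$.

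The first residual term is handled by exploiting the explicit form $s(\vr,\vt)=c_v\log\vt-\log\vr$. The assumption (\ref{u8}), $|s|\leq \Ov{s}$ $\Nu$--a.s., is equivalent to the two--sided pointwise bound $e^{-\Ov{s}}\vt^{c_v}\leq \vr \leq e^{\Ov{s}}\vt^{c_v}$ on the support of $\Nu$. Combining the lower bound with $\vr e(\vr,\vt)=c_v\vr\vt$ gives $\vt^{c_v+1}\aleq \vr e(\vr,\vt)$. Since $c_v>1$ implies $c_v+1>2$, we obtain the scalar inequality $\vt^2 \aleq 1+\vt^{c_v+1}\aleq 1+\vr e(\vr,\vt)$. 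Restricting to the residual part and invoking (\ref{w1}) yields
\[
\left< \Nu;[\vt^2]_{\rm res}\right> \aleq \left< \Nu;[1+\vr e(\vr,\vt)]_{\rm res}\right> \aleq \left< \Nu;\mathcal{E}\left(\vr,\vt,\vu\,\Big|\,\tvr,\tvt,\tvu\right)\right> \leq \mathcal{H}(t).
\]
This is the crucial structural step, and the place where both the specific perfect--gas form of the entropy and the sharp requirement $c_v>1$ enter; without $c_v>1$ the residual temperature term would not be dominated by the residual internal energy.

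The second residual term is simpler. Using $|s|\leq \Ov{s}$ and Young's inequality $\vr|\vu|\leq \tfrac12\vr+\tfrac12\vr|\vu|^2$, we get
\[
\left< \Nu;[\vr|s(\vr,\vt)|\,|\vu|]_{\rm res}\right> \aleq \Ov{s}\left< \Nu;[\vr+\vr|\vu|^2]_{\rm res}\right> \aleq \mathcal{H}(t),
\]
again by (\ref{w1}). The essential part of $\vr|s||\vu|$ does not appear because in (\ref{u7}) only the residual part is present, thanks to the Korn--Poincar\'e absorption that produced (\ref{u7}) from (\ref{u5}).

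Inserting these two bounds into (\ref{u7}), one ends up with
\[
\mathcal{H}(\tau)+\int_0^\tau\!\intO{\left<\Nu;\mu_0\Big|\sqrt{\tfrac{\tvt}{\vt}}\mathbb{T}[\Du]-\sqrt{\tfrac{\vt}{\tvt}}\mathbb{T}[\Grad\tvu]\Big|^2+\mu_1|\mathbb{T}[\Du]-\mathbb{T}[\Grad\tvu]|^2+\kappa\Big|\tfrac{\DT}{\vt}-\Grad\log\tvt\Big|^2 \right>} \dt \aleq \int_0^\tau \mathcal{H}(t)\,\dt,
\]
so that all dissipative terms on the left are nonnegative and Gronwall's lemma yields $\mathcal{H}\equiv 0$. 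The vanishing of $\mathcal{H}$ together with (\ref{w1}) forces $\Nu$ to be a Dirac mass in $(\vr,\vt,\vu)$ concentrated at $(\tvr,\tvt,\tvu)$, and the compatibility condition (\ref{G4}) then fixes the $\Du$ and $\DT$ marginals at $\SGrad[\tvu]$ and $\Grad\tvt$, as required. The only nonroutine point in the whole argument is the pointwise bound on $\vt$ in terms of $\vr e$, which genuinely relies on the combination of the perfect--gas entropy formula, the hypothesis $c_v>1$, and the entropy bound (\ref{u8}); the rest is Young's inequality plus Gronwall.
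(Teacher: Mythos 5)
Your proof is correct and follows essentially the same route as the paper's: both start from the absorbed inequality (\ref{u7}), use (\ref{u8}) with the perfect--gas entropy formula to get $\vt^{c_v}\aleq\vr$ and hence $\vt^{c_v+1}\aleq\vr e(\vr,\vt)$ (where $c_v>1$ ensures $\vt^2$ is dominated), handle $\vr|s||\vu|$ by $|s|\leq\Ov{s}$ together with $\vr|\vu|\aleq\vr+\vr|\vu|^2$, and close with (\ref{w1}) and Gronwall. You merely spell out the two bounds the paper leaves implicit.
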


\begin{proof}

In view of (\ref{w1}), (\ref{u7}), and (\ref{u8}), we have to control only the integral
\[
\int_0^\tau \intO{ \left< \Nu; [ \vt^2 ]_{\rm res} \right> }.
\]
It follows from (\ref{u8}) that
\[
\vt^{c_v} \aleq \vr; \ \mbox{whence}\ \vt^{c_v + 1} \aleq \vr \vt = \frac{1}{c_v} \vr e(\vr, \vt).
\]
Thus the desired conclusion follows from (\ref{w1}).

\end{proof}

\subsection{Unconditional results}

Let the molecular pressure  $p_M$ of the gas and the associate internal energy $e_M$ be interrelated through the monoatomic gas state equation,
\[
p_M(\vr, \vt) = \frac{2}{3} \vr e_M(\vr, \vt).
\]
It follows from Gibbs' relation (\ref{N1}) that
\begin{equation} \label{ZD10}
p_M(\vr, \vt) = \vt^{5/2} P \left( \frac{\vr}{\vt^{3/2}} \right)
\end{equation}
for some function $P$, see \cite[Chapter 2]{FeNo6}. Motivated by \cite[Chapter 3]{FeNo6}, we further assume that
$P \in C^1 [0, \infty) \cap C^5(0, \infty)$,
\begin{equation} \label{ZD11}
P(0) = 0, \ P'(q) > 0 \ \mbox{for all} \ q \geq 0,
\end{equation}
\begin{equation} \label{ZD12}
0 < \frac{ \frac{5}{3} P(q) - P'(q) q }{q} < c \ \mbox{for all}\ q > 0, \ \lim_{q \to \infty} \frac{P(q)}{q^{5/3}} = \Ov{p} > 0.
\end{equation}
Note that these requirements basically follow from the thermodynamic stability hypothesis (\ref{w1-}).

Moreover, we set
\begin{equation} \label{ZD13}
e_M(\vr, \vt) = \frac{3}{2} \vt \left( \frac{ \vt^{3/2} }{\vr} \right) P \left( \frac{\vr}{\vt^{3/2}} \right),
\end{equation}
and, by virtue of Gibbs' relation (\ref{N1}),
\begin{equation} \label{ZD14}
s_M (\vr, \vt) = S \left( \frac{\vr}{\vt^{3/2}} \right),
\end{equation}
for some function $S$, see \cite[Chapter 2]{FeNo6}, where
\begin{equation} \label{ZD15}
S'(q) = - \frac{3}{2} \frac{ \frac{5}{3} P(q) - P'(q) q }{q^2} < 0.
\end{equation}
Finally, we impose the Third law of thermodynamics in the form
\begin{equation} \label{ZD16}
\lim_{q \to \infty} S(q) = 0.
\end{equation}

\begin{Theorem} \label{T3}
Let $\kappa$ and $\mu$ satisfy (\ref{u4}) with $\mu_1 > 0$.
Let the thermodynamic functions $p$, $e$, and $s$ be given as
\[
p(\vr, \vt) = p_M(\vr, \vt) + p_R(\vr, \vt), \ e(\vr, \vt) = e_M(\vr, \vt) + e_R(\vr, \vt),\
s(\vr, \vt) = s_M(\vr, \vt) + s_R(\vr, \vt),
\]
where $p_M$, $e_M$, and $s_M$ satisfy (\ref{ZD10}--\ref{ZD16}), and
\begin{equation} \label{ZZ1}
p_R = a \vt^2, \ e_R = a \frac{\vt^2}{\vr}, \ s_R = 2 a \frac{\vt}{\vr}, \ a > 0.
\end{equation}

Assume that $[\tvr, \tvt, \tvu]$ is a classical solution to the Navier--Stokes--Fourier system
(\ref{N2}--\ref{N10}) in $[0,T] \times \Omega$ emanating from the initial data $[\vr_0, \vt_0, \vu_0]$ with $\vr_0, \vt_0 > 0$ in $\Ov{\Omega}$. Assume further that $\{ \Nu \}_{(t,x) \in Q_T}$ is a (DMV) solution to the same problem in the sense of Definition \ref{D1}  such that
\[
\mathcal{V}_{0,x} = \delta_{ [\vr_0, \vt_0, \vu_0] } \ \mbox{for a.a.}\ x \in \Omega.
\]

Then
\[
\mathcal{V}_{t,x} = \delta_{ [\tvr(t,x), \tvt(t,x), \tvu(t,x), \SGrad [\tvu] (t,x), \Grad \tvt(t,x)] } \ \mbox{for a.a.}\ (t,x) \in Q_T.
\]

\end{Theorem}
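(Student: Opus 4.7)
The plan is to show that under the hypotheses on $p$, $e$, $s$ the relative energy inequality (\ref{u7}) collapses to the Gronwall form $\mathcal{H}(\tau) \aleq \int_0^\tau \mathcal{H}(t)\,\dt$, whence $\mathcal{H} \equiv 0$ (using $\mathcal{H}(0) = 0$ from the Dirac initial datum) and the identification of $\Nu$ follows as in Theorems \ref{TG1}--\ref{T2}. First note that (\ref{u2}) is automatic here: $p_R = a\vt^2 = \vr e_R$, while $p_M \aleq \vr e_M$ by (\ref{ZD10}) and (\ref{ZD13}), so (\ref{u5}) and (\ref{u7}) apply verbatim. It then suffices to dominate the residual terms $[\vt^2]_{\rm res}$ and $[\vr|s||\vu|]_{\rm res}$ on the right-hand side of (\ref{u7}) by $\int_0^\tau \mathcal{H}\,\dt$, using only the a priori bounds built into Definition \ref{D1}, with no pointwise restriction on $\Nu$.

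The $\vt^2$ term is free: the radiation energy $\vr e_R = a\vt^2$ is part of $\vr e$, so the coercivity estimate (\ref{w1}) yields $[\vt^2]_{\rm res} \aleq [\vr e]_{\rm res} \aleq \mathcal{E}$. For the entropy--velocity product I split $s = s_M + s_R$. The radiation contribution $\vr s_R = 2a\vt$ is handled by Young's inequality $\vt|\vu| \leq \vt^2 + |\vu|^2$ together with $|\vu|^2 \leq 2|\vu-\tvu|^2 + 2|\tvu|^2$: the $|\vu - \tvu|^2$ piece is absorbed by the viscous dissipation on the left-hand side through the Korn--Poincar\'e inequality (\ref{G12}) (exactly as used in the derivation of (\ref{u7})), while the bounded term contributes at most $\|\tvu\|_\infty^2 [1]_{\rm res} \aleq \mathcal{H}$.

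The main obstacle is the molecular contribution $\vr|s_M||\vu|$. From (\ref{ZD15}), the bound $(5/3)P(q) - P'(q)q \leq cq$ in (\ref{ZD12}), and the Third-law normalization (\ref{ZD16}) one derives the logarithmic estimate $|s_M(\vr,\vt)| \aleq 1 + |\log \vr| + |\log \vt|$. Splitting $\vr|s_M||\vu| \leq \vr|s_M||\vu - \tvu| + \vr|s_M||\tvu|$ and applying Young's inequality to the first piece reduces matters to controlling $\int [\vr s_M^2]_{\rm res}$, since $\vr|\vu - \tvu|^2 \aleq \mathcal{E}$ by definition. The logarithmic blow-ups in $\vr s_M^2 \aleq \vr + \vr(\log \vr)^2 + \vr(\log \vt)^2$ are then tamed on the residual set by combining the monoatomic asymptotics $P(q) \sim \bar p q^{5/3}$ from (\ref{ZD12}), which delivers $\vr^{5/3} \aleq \vr e_M$ at large density, with the radiation-generated bound $\vt^2 \aleq \vr e$ from the previous step. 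The companion term $\vr|s_M||\tvu| \aleq \vr|s_M| \aleq [\vr|s|]_{\rm res} \aleq \mathcal{E}$ again by (\ref{w1}).

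Inserting these bounds into (\ref{u7}) produces $\mathcal{H}(\tau) \aleq \int_0^\tau \mathcal{H}(t)\,\dt$, and Gronwall's lemma forces $\mathcal{H} \equiv 0$. By (\ref{w1}), the $[\vr, \vt, \vu]$--marginal of $\Nu$ coincides with the Dirac mass at $(\tvr, \tvt, \tvu)$ a.a. in $Q_T$, and the compatibility condition (\ref{G4}) then pins the $[\Du, \DT]$--marginals to $\SGrad[\tvu]$ and $\Grad \tvt$. The most delicate point is the estimate $[\vr s_M^2]_{\rm res} \aleq \mathcal{E}$ described above: it requires the full force of the monoatomic pressure asymptotics (\ref{ZD12}) together with the radiation bound $\vt^2 \aleq \vr e$, and it is precisely this interplay that justifies the specific constitutive ansatz (\ref{ZD10})--(\ref{ZZ1}) in place of the weaker structural condition (\ref{u2}) used in Theorems \ref{T1} and \ref{T2}.
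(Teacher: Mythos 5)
Your proof follows the same overall strategy as the paper's: reduce (\ref{u7}) to the Gronwall form by dominating the residual terms $[\vt^2]_{\rm res}$ and $[\vr|s||\vu|]_{\rm res}$, splitting $s = s_M + s_R$, and absorbing the velocity increments via Korn--Poincar\'e (\ref{G12}). The paper compresses the remaining difficulty into the single pointwise inequality $\vr|s_M|^2 \aleq 1 + \vr + \vr e_M$, which it imports from \cite[Section 3.5.1]{BreFei17A}; you instead attempt to re-derive it in place, and that is where the argument has a gap.

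The estimate $|s_M(\vr,\vt)| \aleq 1 + |\log\vr| + |\log\vt|$ is a valid upper bound, but it discards the crucial one-sidedness that (\ref{ZD15})--(\ref{ZD16}) provide: since $s_M = S(\vr/\vt^{3/2})$ with $S' < 0$ and $S(q) \to 0$ as $q \to \infty$, the function $S$ is in fact \emph{bounded} for $q \geq 1$ and blows up only logarithmically as $q \to 0^+$, so the correct statement is $|S(q)| \aleq 1 + (\log(1/q))_+$. Your symmetric bound then yields $\vr s_M^2 \aleq \vr + \vr(\log\vr)^2 + \vr(\log\vt)^2$, and the last term diverges as $\vt \to 0$ with $\vr$ held at, say, a fixed value in $[\delta, 1/\delta]$ -- a regime lying inside the residual set. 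Nothing on the coercive side of (\ref{w1}) controls it there ($\vr$, $\vr e$, and $\vr|s|$ all remain bounded as $\vt \to 0$), whereas the genuine quantity $\vr s_M^2$ actually \emph{vanishes} in that limit because $q = \vr/\vt^{3/2} \to \infty$ and $S(q) \to 0$. The radiation bound $\vt^2 \aleq \vr e$ that you invoke goes the wrong way for small $\vt$ and does not repair this; moreover it is not needed for this particular inequality, which is a statement about the monoatomic component alone (the radiation piece is only used elsewhere, for $[\vt^2]_{\rm res}$ and $\vr s_R|\vu|$). To fix the sketch you must keep the one-sided structure and treat the regimes $q \geq 1$ and $q < 1$ separately, rather than splitting $|\log q|$ into $|\log\vr| + |\log\vt|$; this is essentially what the reference cited by the paper carries out, so the cleanest route is simply to quote that inequality as the paper does.
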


\begin{Remark} \label{R2}

The quantity $p_R$ corresponds to the ``radiation'' component of the pressure. Note that the standard theory of radiative fluids postulates
$p_R = a \vt^4$ rather than (\ref{ZZ1}), see Oxenius \cite{OX} or \cite[Chapter 2]{FeNo6}. The present setting is in the spirit of models
of neutron stars studied by Lattimer et al.  \cite{LaVaPrPr}.

\end{Remark}

\begin{proof}

A short inspection of (\ref{u7}) reveals that the only term we have to handle is
\[
\int_0^\tau \intO{ \left< \Nu; \left[ \vr |s_M(\vr, \vt)|^2 + \vr s_R |\vu| \right]_{\rm res} \right> }.
\]

First, it follows from hypothesis (\ref{ZZ1}) that
\[
\vr s_R |\vu| \aleq \vr s_R |\vu - \tvu | + \vr s_R\aleq \vt |\vu - \tvu | + \vr s_R
\]
and
\[ \vt |\vu - \tvu | \leq c(\delta) \vt^2 + \delta   |\vu - \tvu |^2 \aleq c(\delta) \vr e_R + \delta   |\vu - \tvu |^2.
\]

Second, it was shown in \cite[Section 3.5.1]{BreFei17A} that the inequality
\[
\vr |s_M (\vr, \vt) |^2 \aleq 1 + \vr + \vr e_M(\vr, \vt)
\]
follows from the hypotheses (\ref{ZD10}--\ref{ZD16}).

We complete the proof by applying  (\ref{G12}) and (\ref{w1}).
\end{proof}

\section{Conditional regularity}
\label{C}

Combining the regularity criterion of Sun, Wang, and Zhang \cite{SuWaZh} with the (DMV)--strong uniqueness result stated in Theorem \ref{T1}
we deduce the following result.

\begin{Theorem} \label{T4}
Let $\kappa > 0$, $\mu > 0$, and $\lambda \geq 0$ be constant. Let the thermodynamic functions $p$, $e$, and $s$ comply with the perfect gas constitutive relations
\[
p(\vr, \vt) = \vr \vt, \ e(\vr, \vt) = c_v \vt,\ s(\vr, \vt) = \log \left( \frac{\vt^{c_v}}{\vr} \right) , \ c_v > 1.
\]

Assume that $\{ \Nu \}_{(t,x) \in Q_T}$ is a (DMV) solution of the Navier--Stokes--Fourier system
(\ref{N2}--\ref{N10}) in the sense of Definition \ref{D1} such that
\begin{equation} \label{C1}
\Nu \left\{ 0 < \underline{\vr} \leq \vr \leq \Ov{\vr}, \ \vt \leq \Ov{\vt}, \ |\vu| \leq \Ov{u} \right\} = 1
\ \mbox{for a.a.} \ (t,x) \in Q_T
\end{equation}
for some constants $\underline{\vr}$, $\Ov{\vr}$, $\Ov{\vt}$, and $\Ov{\vu}$. Assume further that $\Omega \subset R^3$ is a bounded domain of class $C^{2 + \nu}$, and
\[
\mathcal{V}_{0,x} = \delta_{\vr_0(x), \vt_0(x), \vu_0(x)} \ \mbox{for a.a.}\ x \in \Omega,
\]
where $[\vr_0, \vt_0, \vu_0]$ belong to the regularity class
\[
\begin{split}
\vr_0, \vt_0 &\in W^{3,2}(\Omega), \ \vr, \ \vt > 0 \ \mbox{in} \ \Ov{\Omega},\  \vu_0 \in W^{3,2}_0 (\Omega; R^3)\\
\vu_0|_{\partial \Omega} &= 0, \ \Grad \vt_0 \cdot \vc{n}|_{\partial \Omega} = 0,\
\Grad p(\vr_0, \vt_0) = \Div \mathbb{S}( \Grad \vu_0).
\end{split}
\]

Then
\[
\mathcal{V}_{t,x} = \delta_{ [\tvr(t,x), \tvt(t,x), \tvu(t,x), \SGrad [\tvu] (t,x), \Grad \tvt(t,x)] } \ \mbox{for a.a.}\ (t,x) \in Q_T,
\]
where $[\tvr, \tvt, \tvu]$ is a classical solution to the Navier--Stokes--Fourier system with the initial data $[\vr_0, \vt_0, \vu_0]$.

\end{Theorem}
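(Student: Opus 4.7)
The plan is to combine a local-in-time classical existence result, the weak--strong uniqueness Theorem \ref{T1}, and the Sun--Wang--Zhang regularity criterion via a continuation argument.

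\textbf{Step 1: Local classical solution.} The regularity and compatibility conditions imposed on $[\vr_0,\vt_0,\vu_0]$ are exactly the standard ones for local-in-time well-posedness of the Navier--Stokes--Fourier system with perfect gas constitutive relations on a $C^{2+\nu}$ domain. I would invoke the corresponding local existence theorem to obtain a maximal time $T_\ast \in (0,T]$ and a classical solution $[\tvr,\tvt,\tvu]$ on $[0,T_\ast)$ in the class required by Theorem \ref{T1}, with $\tvr,\tvt > 0$ throughout.

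\textbf{Step 2: Identification on $[0, T_\ast)$.} On any subinterval $[0,\tau]$ with $\tau < T_\ast$, all hypotheses of Theorem \ref{T1} are met: the transport coefficients are positive constants; the perfect gas thermodynamic functions satisfy Gibbs' relation, thermodynamic stability, and (\ref{u2}) (since $p = \vr\vt \approx \vr e$); the DMV initial datum is the Dirac mass concentrated at $[\vr_0,\vt_0,\vu_0]$; and the hypothesis (\ref{u6}) follows from (\ref{C1}), which provides the upper bounds $\vt \leq \Ov{\vt}$ and $|\vu| \leq \Ov{\vu}$. Theorem \ref{T1} therefore yields
\[
\mathcal{V}_{t,x} = \delta_{[\tvr(t,x),\tvt(t,x),\tvu(t,x),\SGrad[\tvu](t,x),\Grad\tvt(t,x)]}\quad\text{for a.a. }(t,x) \in (0,T_\ast)\times\Omega.
\]

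\textbf{Step 3: Continuation to $[0,T]$.} Suppose for contradiction that $T_\ast < T$. The identification in Step 2 combined with (\ref{C1}) transfers the uniform bounds to the classical solution:
\[
0 < \underline{\vr} \leq \tvr(t,x) \leq \Ov{\vr},\qquad \tvt(t,x) \leq \Ov{\vt},\qquad |\tvu(t,x)| \leq \Ov{\vu}
\]
for a.a., and by continuity for all, $(t,x) \in [0,T_\ast)\times\Ov{\Omega}$. The Sun--Wang--Zhang blow-up criterion \cite{SuWaZh} asserts that a classical solution of the three-dimensional Navier--Stokes--Fourier system in this regularity class can be continued past $T_\ast$ provided the density remains bounded from above and away from zero and the temperature remains bounded from above on $[0,T_\ast)$. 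These are exactly the bounds we just established, contradicting the maximality of $T_\ast$. Hence $T_\ast \geq T$, and the identification from Step 2 holds on all of $Q_T$.

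\textbf{Main obstacle.} The nontrivial ingredient is verifying that the precise form of the Sun--Wang--Zhang criterion matches the bounds supplied by (\ref{C1}): in particular, that for the heat-conducting system one indeed needs only $L^\infty$ control of $\vr$, $\vr^{-1}$, and $\vt$ to rule out breakdown (the $L^\infty$ bound on $\vu$ being an extra cushion available from (\ref{C1}) but possibly redundant). The rest of the argument is a clean bootstrap: the DMV bounds give control of the classical solution on its maximal interval, and the regularity criterion closes the loop. No new estimates beyond those already embedded in Theorem \ref{T1} and \cite{SuWaZh} are required.
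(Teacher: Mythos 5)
Your proposal is correct and follows essentially the same route as the paper: local existence (Valli, Valli--Zajaczkowski), identification of the DMV solution with the classical one on the maximal interval via Theorem \ref{T1}, and then the Sun--Wang--Zhang criterion to extend the classical solution up to $T$, using the fact that the uniform bounds in \eqref{C1} transfer to $[\tvr,\tvt,\tvu]$ once the measures are identified with Dirac masses. Your explicit check that \eqref{u6} follows from \eqref{C1} and your remark that the velocity bound is needed for Theorem \ref{T1} but is not part of the Sun--Wang--Zhang blow-up criterion (which only requires $L^\infty$ control of $\vr$, $\vr^{-1}$, and $\vt$) are both accurate and slightly more detailed than the paper's own terse exposition.
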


\begin{proof}

By virtue of the standard local existence results of Valli, and Valli and Zajaczkowski \cite{Vall2}, \cite{Vall1}, and \cite{VAZA} (among others),
the Navier--Stokes--Fourier system (\ref{N2}--\ref{N7}) admits a local classical solution $[\tvr, \tvt, \tvu]$ on a maximal existence interval $[0, T_{\rm max})$, $T_{\rm max} > 0$.
It follows from  hypothesis (\ref{C1}) and Theorem \ref{T1}, that the (DMV) solution coincides with $[\tvr, \tvt, \tvu]$ at least on any compact subinterval of
$[0, T_{\rm max})$.

On the other hand, it follows from hypothesis (\ref{C1})
and the fact that the strong and (DMV) solutions coincide that the classical solution $[\tvr, \tvt, \tvu]$ complies with the regularity criterion established by
Sun, Wang, and Zhang \cite{SuWaZh}. Consequently, the classical solution $[\tvr, \tvt, \tvu]$ can be extended up to the time $T$, and, by Theorem \ref{T1} coincides with the (DMV) solution on $Q_T$.

\end{proof}

\section{Concluding remarks}
\label{CC}

We discuss briefly the case of more complex constitutive equations considered in the context of weak (distributional) solutions in \cite{FeiNov10}. The thermodynamic functions have the same structure as in Theorem \ref{T3}:
\[
p(\vr, \vt) = p_M(\vr, \vt) + p_R(\vr, \vt), \ e(\vr, \vt) = e_M(\vr, \vt) + e_R(\vr, \vt),
\]
where the molecular components $p_M$, $e_M$, (and $s_M$) obey (\ref{ZD10}--\ref{ZD16}) while the radiation terms satisfy
\[
p_R = a \frac{1}{3} \vt^4,\ e_R = a \frac{\vt^4}{\vr}, \ a > 0.
\]
The transport coefficients $\mu = \mu(\vt)$, $\eta = \eta(\vt)$ and $\kappa = \kappa(\vt)$ are continuously differentiable functions of the temperature $\vt$,
\[
| \mu'(\vt) | \aleq c, \ (1 + \vt^\alpha) \aleq \mu(\vt) \leq (1 + \vt^\alpha) ,\ 0 \leq \eta(\vt) \aleq (1 + \vt^\alpha),\
\alpha \in \left( \frac{2}{5}; 1 \right],
\]
\[
(1 + \vt^3) \aleq \kappa(\vt) \aleq (1 + \vt^3).
\]

The above hypotheses guarantee the \emph{existence} of a weak (distributional) solution to the Navier--Stokes--Fourier system, see \cite[Chapter 3]{FeNo6}.
Moreover, as shown in \cite{Feireisl2012}, \cite{FeiNov10}, the weak--strong uniqueness principle holds in this class of weak solutions. The proof requires
validity of certain Sobolev embedding relations that can be verified for the weak solutions but that might be violated by the (DMV) solutions specified
in Definition \ref{D1}. To retain these properties and, consequently, to extend the results of \cite{FeiNov10} to the class of (DMV) solutions,
the alternative definition proposed in Remark \ref{R22} must be adopted. We leave the details to the interested reader.

\def\cprime{$'$} \def\ocirc#1{\ifmmode\setbox0=\hbox{$#1$}\dimen0=\ht0
  \advance\dimen0 by1pt\rlap{\hbox to\wd0{\hss\raise\dimen0
  \hbox{\hskip.2em$\scriptscriptstyle\circ$}\hss}}#1\else {\accent"17 #1}\fi}


\begin{thebibliography}{10}

\bibitem{BrDeSz}
Y.~Brenier, C.~De~Lellis, and L.~Sz{\'e}kelyhidi, Jr.
\newblock Weak-strong uniqueness for measure-valued solutions.
\newblock {\em Comm. Math. Phys.}, {\bf 305}(2):351--361, 2011.

\bibitem{BreFei17B}
J.~B{\v r}ezina and E.~Feireisl.
\newblock Maximal dissipation principle for the complete {E}uler system.
\newblock 2017.
\newblock arxiv preprint 1712.04761.

\bibitem{BreFei17}
J.~B{\v r}ezina and E.~Feireisl.
\newblock Measure-valued solutions to the complete {E}uler system.
\newblock 2017.
\newblock arxiv preprint No. 1702.04878, to appear in J. Math. Soc. Japan.

\bibitem{BreFei17A}
J.~B{\v r}ezina and E.~Feireisl.
\newblock Measure-valued solutions to the complete {E}uler system revisited.
\newblock 2017.
\newblock arxiv preprint No. 1710.10751.

\bibitem{CrGaTz}
C.~Christoforou, M.~Galanopoulou, and A.E. Tzavaras.
\newblock A symmetrizable extension of polyconvex thermoelasticity and
  applications to zero-viscosity limits and weak-strong uniqueness.
\newblock 2017.
\newblock arxiv preprint No.1711.01582.

\bibitem{Daf4}
C.M. Dafermos.
\newblock The second law of thermodynamics and stability.
\newblock {\em Arch. Rational Mech. Anal.}, {\bf 70}:167--179, 1979.

\bibitem{DeStTz}
S.~Demoulini, D.~M.~A. Stuart, and A.~E. Tzavaras.
\newblock Weak-strong uniqueness of dissipative measure-valued solutions for
  polyconvex elastodynamics.
\newblock {\em Arch. Ration. Mech. Anal.}, {\bf 205}(3):927--961, 2012.

\bibitem{DipMaj}
R.~J. DiPerna and A.~J. Majda.
\newblock Oscillations and concentrations in weak solutions of the
  incompressible fluid equations.
\newblock {\em Comm. Math. Phys.}, {\bf 108}(4):667--689, 1987.

\bibitem{Dip2}
R.J. Di{P}erna.
\newblock Measure-valued solutions to conservation laws.
\newblock {\em Arch. Rat. Mech. Anal.}, {\bf 88}:223--270, 1985.

\bibitem{Eri}
J.L. Ericksen.
\newblock {\em Introduction to the thermodynamics of solids, revised ed.}
\newblock Applied Mathematical Sciences, vol. 131, Springer-Verlag, New York,
  1998.

\bibitem{Feireisl2012}
E.~Feireisl.
\newblock Relative entropies in thermodynamics of complete fluid systems.
\newblock {\em Discrete Contin. Dyn. Syst.}, {\bf 32}(9):3059--3080, 2012.

\bibitem{FGSWW1}
E.~Feireisl, P.~Gwiazda, A.~{\'S}wierczewska-Gwiazda, and E.~Wiedemann.
\newblock Dissipative measure-valued solutions to the compressible
  {N}avier--{S}tokes system.
\newblock {\em Calc. Var. Partial Differential Equations}, 55(6):55:141, 2016.

\bibitem{FeiLM}
E.~Feireisl and M.~Luk{\' a}{\v c}ov{\' a}-{M}edvi{\v d}ov{\' a}.
\newblock Convergence of a mixed finite element finite volume scheme for the
  isentropic {N}avier--{S}tokes system via dissipative measure--valued
  solutions.
\newblock 2017.
\newblock Foundation Comput. Math.

\bibitem{FeNo6}
E.~Feireisl and A.~Novotn{\' y}.
\newblock {\em Singular limits in thermodynamics of viscous fluids}.
\newblock Birkh{\" a}user-Verlag, Basel, 2009.

\bibitem{FeiNov10}
E.~Feireisl and A.~Novotn{\' y}.
\newblock Weak-strong uniqueness property for the full
  {N}avier-{S}tokes-{F}ourier system.
\newblock {\em Arch. Rational Mech. Anal.}, {\bf 204}:683--706, 2012.

\bibitem{FjKaMiTa}
U.~K. Fjordholm, R.~K{\" a}ppeli, S.~Mishra, and E.~Tadmor.
\newblock Construction of approximate entropy measure valued solutions for
  hyperbolic systems of conservation laws.
\newblock {\em Foundations Comp. Math.}, pages 1--65, 2015.

\bibitem{FjMiTa2}
U.~S. Fjordholm, S.~Mishra, and E.~Tadmor.
\newblock Arbitrarily high-order accurate entropy stable essentially
  nonoscillatory schemes for systems of conservation laws.
\newblock {\em SIAM J. Numer. Anal.}, {\bf 50}(2):544--573, 2012.

\bibitem{FjMiTa1}
U.~S. Fjordholm, S.~Mishra, and E.~Tadmor.
\newblock On the computation of measure-valued solutions.
\newblock {\em Acta Numer.}, {\bf 25}:567--679, 2016.

\bibitem{GSWW}
P.~Gwiazda, A.~\'Swierczewska-Gwiazda, and E.~Wiedemann.
\newblock Weak-strong uniqueness for measure-valued solutions of some
  compressible fluid models.
\newblock {\em Nonlinearity}, {\bf 28}(11):3873--3890, 2015.

\bibitem{KrZa}
D.~Kr{\"o}ner and W.~M. Zajaczkowski.
\newblock Measure-valued solutions of the {E}uler equations for ideal
  compressible polytropic fluids.
\newblock {\em Math. Methods Appl. Sci.}, {\bf 19}(3):235--252, 1996.

\bibitem{LatTza}
C.~Lattanzio and A.~E. Tzavaras.
\newblock Relative entropy in diffusive relaxation.
\newblock {\em SIAM J. Math. Anal.}, {\bf 45}(3):1563--1584, 2013.

\bibitem{LaVaPrPr}
J.M. Lattimer, K.A. Van~{R}iper, M.~Prakash, and M.~Prakash.
\newblock Rapid cooling and the structure of neutron stars.
\newblock {\em Astrophysics J.}, {\bf 425}:802--813, 1994.

\bibitem{MNRR}
J.~M{\' a}lek, J.~Ne{\v c}as, M.~Rokyta, and M.~R{\accent23u}{\v z}i{\v c}ka.
\newblock {\em Weak and measure-valued solutions to evolutionary PDE's}.
\newblock Chapman and Hall, London, 1996.

\bibitem{MNNO}
{\v S}.~Matu{\v s}{\accent23u}-Ne{\v c}asov{\' a} and A.~Novotn{\' y}.
\newblock Measure-valued solution for nonnewtonian compressible isothermal
  monopolar fluid.
\newblock {\em Acta Applicandae Mathematicae}, {\bf 37}:109--128, 1994.

\bibitem{MuPe}
P.~B. Mucha and J.~Peszek.
\newblock The {C}ucker--{S}male {E}quation: {S}ingular {C}ommunication
  {W}eight, {M}easure-{V}alued {S}olutions and {W}eak-{A}tomic {U}niqueness.
\newblock {\em Arch. Ration. Mech. Anal.}, 227(1):273--308, 2018.

\bibitem{Neustup}
J.~Neustupa.
\newblock Measure-valued solutions of the {E}uler and {N}avier-{S}tokes
  equations for compressible barotropic fluids.
\newblock {\em Math. Nachr.}, {\bf 163}:217--227, 1993.

\bibitem{OX}
J.~Oxenius.
\newblock {\em Kinetic theory of particles and photons}.
\newblock Springer-Verlag, Berlin, 1986.

\bibitem{SuWaZh}
Y.~Sun, C.~Wang, and Z.~Zhang.
\newblock A {B}eale-{K}ato-{M}ajda criterion for three dimensional compressible
  viscous heat-conductive flows.
\newblock {\em Arch. Ration. Mech. Anal.}, {\bf 201}(2):727--742, 2011.

\bibitem{Vall2}
A.~Valli.
\newblock A correction to the paper: ``{A}n existence theorem for compressible
  viscous fluids'' [{A}nn. {M}at. {P}ura {A}ppl. (4) {\bf 130} (1982),
  197--213;\ {MR} 83h:35112].
\newblock {\em Ann. Mat. Pura Appl. (4)}, {\bf 132}:399--400 (1983), 1982.

\bibitem{Vall1}
A.~Valli.
\newblock An existence theorem for compressible viscous fluids.
\newblock {\em Ann. Mat. Pura Appl. (4)}, {\bf 130}:197--213, 1982.

\bibitem{VAZA}
A.~Valli and M.~Zajaczkowski.
\newblock {N}avier-{S}tokes equations for compressible fluids: Global existence
  and qualitative properties of the solutions in the general case.
\newblock {\em Commun. Math. Phys.}, {\bf 103}:259--296, 1986.

\end{thebibliography}

\end{document}